\newcommand{\N}{\mathbb{N}} 
\newcommand{\R}{\mathbb{R}} 
\newcommand{\Prob}{\mathbb P}
\newcommand{\E}{\mathbb E}
\newcommand{\V}{\mathbb V}
\newcommand{\dd}{\,\mathrm{d}}
\newcommand{\id}{\overset{\mathrm{d}} {=}}
\DeclareMathOperator{\cov}{cov}
\DeclareMathOperator{\Span}{span}
\theoremstyle{plain} 
\newtheorem{theorem}{Theorem} 
\newtheorem{lemma}[theorem]{Lemma} 
\newtheorem{corollary}[theorem]{Corollary}
\theoremstyle{definition} 
\theoremstyle{remark} 
\begin{document}
\date{\today}
\title{Persistence probabilities of mixed FBM and other mixed processes}
\author{Frank Aurzada\footnote{Technical University of Darmstadt, Schloßgartenstraße 7, 64289 Darmstadt, Germany. E-mail: aurzada@mathematik.tu-darmstadt.de, kilian@mathematik.tu-darmstadt.de} \and Martin Kilian\footnotemark[1] \and Ercan Sönmez\footnote{University of Klagenfurt, Universitätsstraße 65-67, 9020 Klagenfurt, Austria. E-mail: ercan.soenmez@aau.at}}
\maketitle

\begin{abstract}
We consider the sum of two self-similar centred Gaussian processes with different self-similarity indices. Under non-negativity assumptions of covariance functions and some further minor conditions, we show that the asymptotic behaviour of the persistence probability of the sum is the same as for the single process with the greater self-similarity index. 

In particular, this covers the mixed fractional Brownian motion introduced in \cite{Cheridito2001} and shows that the corresponding persistence probability decays asymptotically polynomially with persistence exponent $1-\max(1/2,H),$ where $H$ is the Hurst parameter of the underlying fractional Brownian motion. 
\end{abstract}

\noindent {\bf 2020 Mathematics Subject Classification:} 60G18; 60G22

\bigskip

\noindent {\bf Keywords:} fractional Brownian motion; Gaussian process; integrated fractional Brownian motion; mixed processes; one-sided exit problem; persistence; reproducing kernel Hilbert space; Riemann-Liouville process; self-similarity; stationary process; zero-crossing

\section{Introduction}
Studying the so-called persistence or survival probabilities of stochastic processes is a classical issue in probability theory. For real-valued processes, persistence in particular concerns the event of staying on a half-line for an untypically long time, and one is interested in the asymptotic behaviour of the probability of this event. For many processes $X=(X_t)_{t \ge 0}$ of interest, the persistence probability decays asymptotically polynomially to zero, i.e.
\begin{equation}\label{eq:pppolynomial}
\Prob{\left(\sup_{t \in [0,T]} X_t \le 1\right)} = T^{-\theta +o(1)},\qquad T\to\infty,
\end{equation}
for some constant $\theta \in(0,\infty),$ and the aim is to determine the so-called persistence exponent $\theta$ of $X$. This persistence exponent is of special interest for theoretical physicists as it serves as a simple measure of how fast a complicated physical system returns from a disordered initial condition to its stationary state. The analysis of persistence probabilities of various processes has received considerable attention in recent years, both in theoretical physics and mathematics. For an overview of relevant processes and known results from the theoretical physics point of view, we recommend the surveys \cite{Majumdar2013} and \cite{Majumdar1999} as well as the monographs \cite{Metzler2014} and \cite{Redner2001}, while we refer to \cite{AurzadaSimon2015} for a survey of the mathematics literature in this context. 

Recall that fractional Brownian motion (FBM) $B^H$ with Hurst parameter $H \in (0,1)$ is the unique normalized centred Gaussian process which is $H$-self-similar and has stationary increments. Molchan has shown already in 1999 that $B^H$ satisfies \eqref{eq:pppolynomial} for the persistence exponent $\theta=1-H,$ see \cite{Molchan1999}, but still up to now, persistence probabilities of FBM and related processes have been studied extensively in theoretical physics and mathematics. For instance, we refer to \cite{MolchanKhoklov2004} and \cite{Molchan2017} where the Hausdorff dimension of Lagrangian regular points for the inviscid Burgers' equation with FBM initial velocity is related to the two-sided persistence probabilities of integrated FBM, motivated by \cite{She1992} and \cite{Sinai1992P}. 

The main purpose of this paper is to show that the persistence probability of mixed FBM, as introduced in \cite{Cheridito2001}, asymptotically behaves indeed as in \eqref{eq:pppolynomial} and to determine the corresponding persistence exponent. Mixed FBM $M^{H,\alpha}$ is defined as
\begin{equation}\label{eq:mixedfbm}
M_t^{H,\alpha}:=B_t^{1/2} + \alpha B_t^H, \qquad t \ge 0,
\end{equation}
where $\alpha \in \R \setminus \{0\}, \ H \in (0,1), \ B^H$ is an FBM with Hurst parameter $H$ and $B^{1/2}$ is an independent Brownian motion. Note that this process still has stationary increments, but is {\it not self-similar} itself. 

In fact, we will derive the desired persistence result for a more general class of sums of self-similar centred Gaussian processes with different self-similarity indices, covering not only the mixed FBM $M^{H,\alpha},$ but also e.g.~the case of completely correlated mixed FBM introduced in \cite{Dufitinema2021}. Note that the latter process neither is self-similar nor has stationary increments. Thus, our result contributes to the amount of rather rare persistence results for stochastic processes violating both the properties of self-similarity and stationary increments. Self-similarity is a valuable property in the context of persistence as in this case, one is able to apply the so-called Lamperti transformation to get a stationary process and concerning persistence, many powerful tools are available for the class of stationary centred Gaussian processes, see \cite{DemboMukherjee2015}, \cite{Feldheim2015}, \cite{DemboMukherjee2017}, \cite{Basu2018}, \cite{Feldheim2021}, \cite{AurzadaMukherjee2020} and \cite{FeldheimMukherjee2021}. In particular, combined with non-negative (and non-degenerate) covariances, self-similarity always guarantees the existence of the persistence exponent. In the case that self-similarity is not available, the property of stationary increments turned out to be appropriate as another property that can be used to prove the existence of the persistence exponent, see \cite{AGPP2018}. Besides, one could derive persistence results even outside of the Gaussian setting if one assumes both self-similarity and stationary increments, see \cite{AurzadaMoench2019} and \cite{Moench2021}. 

The outline of the paper is as follows. In Section \ref{sec:result}, we will introduce the class of mixed self-similar processes that are suitable for our purposes and present our main result that for these processes, the persistence probability decays asymptotically polynomially with the persistence exponent of the self-similar process with the greater self-similarity index. In Section \ref{sec:corollaries}, we will then use this result to derive persistence results for the (completely correlated) mixed FBM and other explicit mixed processes of interest. Finally, in Section \ref{sec:proof}, we will prove the main result.

\section{Main result}\label{sec:result}
Recall that for $H>0,$ a stochastic process $(X_t)_{t \ge 0}$ is called $H$-self-similar if $(X_{ct}) \id (c^H X_t)$. We consider the sum of two self-similar centred Gaussian processes with different self-similarity indices, i.e.~$X^H+Y^K,$ where $X^H$ is an $H$-self-similar centred Gaussian process, $Y^K$ is a $K$-self-similar centred Gaussian process and $K<H$. The main result of this paper, which is given in the following theorem, states that under the assumption that $X^H$ and $X^H+Y^K$ have non-negative covariance functions, respectively, and that some minor conditions hold as specified below, the persistence probability of $X^H+Y^K$ has -- up to terms of lower order -- the same asymptotic behaviour as the persistence probability of $X^H$. 
\begin{theorem} \label{thm:thetheorem}
For $0 < K < H,$ let $X^H$ and $Y^K$ be self-similar centred Gaussian processes with a.s.~càdlàg sample paths and self-similarity indices $H$ and $K,$ respectively. Let us assume that the covariance functions of the processes $X^H$ and $X^H+Y^K$ are non-negative, respectively, and that the autocovariance function $\tau \mapsto \cov(X_1^H, e^{-\tau H} X_{e^\tau}^H)$ of the Lamperti transform of $X^H$ is continuous, integrable and not the zero function. Further let
\begin{equation*}
\theta_X:=-\lim_{T \to \infty} \frac{\log \Prob{\left( \sup_{t\in[0,T]} X^H_t \leq 1 \right)}} {\log T}
\end{equation*}
be the persistence exponent of $X^H$. Then
\begin{equation*}
\Prob{\left( \sup_{t\in[0,T]} X^H_t + Y^K_t \leq 1 \right)} = T^{-\theta_X +o(1)}, \qquad T \to \infty.
\end{equation*}
\end{theorem}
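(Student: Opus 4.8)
The plan is to remove the self-similarity via the Lamperti transformation with the \emph{larger} index $H$, applied simultaneously to $X^H$ and to the sum $Z:=X^H+Y^K$. Writing $t=e^\tau$ and dividing by $e^{H\tau}$, persistence of $X^H$ on $[1,T]$ becomes persistence on the Lamperti scale $[0,\log T]$ of the stationary process $\widetilde X_\tau:=e^{-H\tau}X^H_{e^\tau}$ below the barrier $e^{-H\tau}$, while persistence of $Z$ becomes the same question for $W_\tau:=e^{-H\tau}Z_{e^\tau}=\widetilde X_\tau+P_\tau$ with perturbation $P_\tau:=e^{-(H-K)\tau}\,\widetilde Y_\tau$, where $\widetilde Y_\tau:=e^{-K\tau}Y^K_{e^\tau}$ is stationary. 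The decisive structural fact is that, because $H>K$, the factor $e^{-(H-K)\tau}$ forces $P_\tau\to0$, so $W$ is an asymptotically stationary perturbation of $\widetilde X$; moreover $\cov(Z)\ge0$ makes $W$ associated (Pitt's theorem) and $\cov(X^H)\ge0$ makes $\widetilde X$ associated. Since the barrier is nonnegative, passing from $[0,T]$ to $[1,T]$ changes the probability only by a bounded factor (using association and $\Prob(\sup_{t\in[0,1]}Z_t\le1)>0$), so it suffices to match the exponents of $\Prob(\sup_{\tau\in[0,\log T]}W_\tau\le e^{-H\tau})$ and $\Prob(\sup_{\tau\in[0,\log T]}\widetilde X_\tau\le e^{-H\tau})=T^{-\theta_X+o(1)}$.

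First I would prove the upper bound. Dropping the constraint on an initial segment $[0,L]$ only enlarges the event, and on $[L,\log T]$ the inclusion $\{\sup W_\tau\le e^{-H\tau}\}\cap\{\sup_{\tau\ge L}|P_\tau|\le\eps\}\subseteq\{\sup\widetilde X_\tau\le e^{-H\tau}+\eps\}$ gives
\begin{equation*}
\Prob\Big(\sup_{[L,\log T]}W_\tau\le e^{-H\tau}\Big)\le\Prob\Big(\sup_{[L,\log T]}\widetilde X_\tau\le e^{-H\tau}+\eps\Big)+\Prob\Big(\sup_{\tau\ge L}|P_\tau|>\eps\Big).
\end{equation*}
A block-wise Gaussian maximal inequality (Borell--TIS), using $\V(P_\tau)\le\V(Y^K_1)e^{-2(H-K)\tau}$ and stationarity of $\widetilde Y$, bounds the last probability by $C\exp(-c\eps^2e^{2(H-K)L})$; choosing $L=L(T)\sim\frac1{H-K}\log\log T$ makes it super-polynomially small, hence negligible against $T^{-\theta_X}$, while the first term, over an interval of length $\log T-L(T)=(1-o(1))\log T$, equals $T^{-\theta_X+o(1)}$ by the level-independence of the persistence exponent of $\widetilde X$ (which holds under the integrability of its autocovariance).

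For the lower bound I would use association of $W$ and the FKG inequality for the decreasing events $\{\sup_{[0,L]}W_\tau\le e^{-H\tau}\}$ and $\{\sup_{[L,\log T]}W_\tau\le e^{-H\tau}\}$ to split
\begin{equation*}
\Prob\Big(\sup_{[0,\log T]}W_\tau\le e^{-H\tau}\Big)\ge\Prob\Big(\sup_{[0,L]}W_\tau\le e^{-H\tau}\Big)\cdot\Prob\Big(\sup_{[L,\log T]}W_\tau\le e^{-H\tau}\Big).
\end{equation*}
Bounding the first factor from below by a product of uniformly positive unit-block persistence probabilities gives at least $e^{-CL(T)}=(\log T)^{-C}=T^{-o(1)}$, and for the second factor the reverse comparison $\{\sup\widetilde X_\tau\le e^{-H\tau}-\eps\}\cap\{\sup_{\tau\ge L}|P_\tau|\le\eps\}\subseteq\{\sup W_\tau\le e^{-H\tau}\}$ together with $\Prob(A\cap B)\ge\Prob(A)-\Prob(B^c)$ and the same level-independence yields $T^{-\theta_X+o(1)}$ minus a super-polynomially small term, i.e.\ again $T^{-\theta_X+o(1)}$. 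Multiplying the two lower bounds closes the estimate.

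The genuine obstacle is that $X^H$ and $Y^K$ are \emph{not} assumed independent, so one cannot condition $Y^K$ away and must exploit the non-negativity hypotheses only through association, keeping every comparison between $W$ and $\widetilde X$ at the level of the single perturbation $P$. The delicate quantitative point is balancing two competing scales: $P$ decays only like $e^{-(H-K)\tau}$ in Lamperti time, whereas the target probability decays polynomially in $T$, i.e.\ exponentially in $\log T$; the choice $L(T)\sim\frac1{H-K}\log\log T$ is precisely what lets the perturbation tail beat the polynomial rate while the truncated initial segment still costs only $T^{o(1)}$. The remaining inputs—existence, positivity and, above all, level-independence of the persistence exponent of the stationary Gaussian process $\widetilde X$ under the stated autocovariance conditions, together with the uniform positivity of its finite-interval persistence—are exactly the facts furnished by the cited theory of stationary Gaussian persistence.
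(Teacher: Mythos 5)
Your overall architecture---Lamperti transform, association via Pitt's theorem in place of Slepian, Borell--TIS control of the perturbation, splitting off an initial segment of length $O(\log\log T)$---parallels the paper's proof (its Lemma~\ref{lem:fbmlarge} is essentially your perturbation estimate, written in real time rather than Lamperti time). But the step carrying all the weight is false: there is no ``level-independence of the persistence exponent'' for stationary Gaussian processes, not even under non-negative, continuous, integrable autocovariance. Take $X^H=B^{1/2}$, so that $\widetilde X$ is the Ornstein--Uhlenbeck process with $r(\tau)=e^{-|\tau|/2}$, which satisfies every hypothesis of the theorem. Then $\Prob\bigl(\sup_{\tau\in[0,S]}\widetilde X_\tau\le\eps\bigr)$ is the square-root-boundary probability $\Prob\bigl(B_t\le\eps\sqrt t,\ t\in[1,e^S]\bigr)=e^{-\beta(\eps)S(1+o(1))}$, where $\beta(\eps)$ is \emph{strictly} decreasing in $\eps$ (classical, via zeros of parabolic cylinder functions), with $\beta(0)=1/2=\theta_X$. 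Hence your upper bound only yields $T^{-\beta(2\eps)+o(1)}$ with $\beta(2\eps)<\theta_X$, and your lower bound (barrier $e^{-H\tau}-\eps$) only yields $T^{-\beta(-\eps)+o(1)}$ with $\beta(-\eps)>\theta_X$; neither closes. The root cause is that a constant $\eps$ in Lamperti time is the barrier $\eps\,t^H$ in real time, which grows and genuinely changes the exponent. One must instead keep a \emph{decaying} bound on the perturbation in Lamperti time, $|P_\tau|\le e^{-(H-\gamma)\tau}$ with $K<\gamma<H$ (your Borell--TIS blocks deliver this just as easily), and then one needs a tool showing that shifting the barrier by such a decaying function costs only $T^{o(1)}$. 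That tool is the paper's key idea and is absent from your proof: Lemma~\ref{lem:RKHS} constructs, from the spectral density of $\widetilde X$ (this is exactly where integrability and non-negativity of $r$ enter), a function $h$ in the RKHS of $X^H$ with $h(t)\ge1$ and $h(t)\sim c\,t^H(\log t)^{\alpha-1}$, i.e.\ decaying like $\tau^{\alpha-1}$ in Lamperti time, and \cite[Proposition~1.6]{AurzadaDereich2013} makes RKHS shifts free. Repairing your scheme with $\eps=\eps(T)\to0$ would require continuity of the level-dependent exponent at level $0$ with quantitative control; that is not a citable consequence of integrability and is far harder than the RKHS route.

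There is a second, independent gap in your lower bound: the claim that $\Prob\bigl(\sup_{\tau\in[j,j+1]}W_\tau\le e^{-H\tau}\bigr)=\Prob\bigl(\sup_{t\in[e^j,e^{j+1}]}(X^H_t+Y^K_t)\le1\bigr)$ is bounded below uniformly in $j$. Since $X^H+Y^K$ is neither stationary nor self-similar, and $X^H$, $Y^K$ may be dependent, this is not obvious: on the block $[e^j,e^{j+1}]$ the summand $Y^K$ has typical size $e^{jK}\gg1$, so one cannot intersect with $\{\sup|Y^K|\le1/2\}$, and any proof must exploit that $Y^K$ is small only relative to $X^H$'s scale $e^{jH}$---a miniature version of the theorem itself. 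The paper avoids this entirely: its lower bound leaves the factor $\Prob\bigl(\sup_{t\in[0,(\log T)^p]}(X^H_t+Y^K_t)\le1\bigr)$ untouched and shows it is $T^{-o(1)}$ by iterating inequality \eqref{eq:inducstart-1} (replacing $T$ by $(\log T)^p$ roughly $\log\log\log T$ times, with a Banach fixed-point argument), terminating at an interval of constant length where positivity is a one-off Gaussian fact. You would need either to prove the uniform block estimate or to adopt such an iteration.
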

The result remains true if one replaces $Y^K$ by a finite sum of self-similar centred Gaussian processes $Y^{K_i}$ with self-similarity indices $K_i < H,$ as our proof in Section \ref{sec:proof} can be easily adapted to this setting. From a mathematics point of view, it would be an interesting open problem to study the persistence probability when $Y^K$ is replaced by a (well-defined) infinite sum of $Y^{K_i},$ where e.g.~$\sup_i K_i = H,$ as in this case, our proof technique does not work anymore. 

As already mentioned in the introduction, the assumption of non-negative covariances of $X^H$ (together with the self-similarity and the assumption on the Lamperti transform) guarantees the existence of $\theta_X$. Note that for the mixed process $X^H+Y^K$ on the contrary, the condition of non-negative covariances does not yield the existence of a persistence exponent a priori, as the mixed process is not self-similar anymore. Further note that we do not need any direct assumption on the covariance function of $Y^K$ or on the correlation of $X^H$ and $Y^K$. Thus, in particular, $X^H$ and $Y^K$ do not need to be independent and a persistence exponent of $Y^K$ does not necessarily have to exist.

\section{Mixed FBM and further corollaries}\label{sec:corollaries}
\paragraph{Mixed FBM.}
Let us now come back to the case of mixed fractional Brownian motion, which we defined in \eqref{eq:mixedfbm}. Note that this is a special case of the so-called fractional mixed fractional Brownian motion, which covers all linear combinations of independent fractional Brownian motions with different Hurst parameters, see \cite{ElNouty2003} and \cite{Miao2008}. Recall that fractional Brownian motion (FBM) $B^H$ has the covariance function $(s,t) \mapsto \frac {1} {2} \left(t^{2H} + s^{2H} - |t-s|^{2H}\right),$ which is non-negative. Due to the independence of the underlying fractional Brownian motions, this directly implies also the non-negativity of the covariance function of the (fractional) mixed FBM. Note that the continuous and integrable function $\tau \mapsto \frac {1} {2} \left(e^{H\tau} + e^{-H \tau} - |e^{\tau/2}-e^{-\tau/2}|^{2H}\right)$ is the autocovariance function of the Lamperti transform of $B^H$. Regarding persistence, it was shown in \cite{Molchan1999} that
\begin{equation}\label{eq:molchanpersistence}
\Prob{\left( \sup_{t\in[0,T]} B^H_t \leq 1 \right)}=T^{-(1-H)+o(1)}, \qquad T \to \infty.
\end{equation}
This yields the following corollary of Theorem \ref{thm:thetheorem} for the (fractional) mixed FBM. 
\begin{corollary}
For $0 < K < H < 1,$ let $B^H$ and $B^K$ be independent FBMs with Hurst parameters $H$ and $K,$ respectively, and $a,b \in \R$ with $ab \neq 0$. Then
\begin{equation*}
\Prob{\left( \sup_{t\in[0,T]} a B^H_t +  b B^K_t \leq 1 \right)} = T^{-(1-H) +o(1)}, \qquad T \to \infty.
\end{equation*}
In particular, for the mixed FBM as defined in \eqref{eq:mixedfbm}, we have
\begin{equation*}
\Prob{\left( \sup_{t\in[0,T]} M^{H,\alpha}_t \leq 1 \right)} = T^{-(1-\max\{\frac {1} {2},H\}) +o(1)}, \qquad T \to \infty.
\end{equation*} 
\end{corollary}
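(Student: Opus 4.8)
The plan is to obtain both statements as applications of Theorem \ref{thm:thetheorem}, so that the work reduces to verifying its hypotheses and identifying the relevant persistence exponent. For the first claim I would set $X^H := a B^H$ and $Y^K := b B^K$. Multiplying by a nonzero constant preserves $H$-self-similarity, centredness and Gaussianity, and FBM has a.s.\ continuous, hence càdlàg, paths, so the structural assumptions hold for both summands. The covariance of $X^H = a B^H$ equals $a^2$ times the FBM covariance $\tfrac12(t^{2H}+s^{2H}-|t-s|^{2H}) \ge 0$, hence is non-negative; and by independence of $B^H$ and $B^K$, $\cov(aB^H_s + bB^K_s,\, aB^H_t + bB^K_t) = a^2\cov(B^H_s,B^H_t) + b^2\cov(B^K_s,B^K_t) \ge 0$, so the second non-negativity assumption holds as well. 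Finally, the Lamperti autocovariance of $aB^H$ is $a^2$ times the explicit function $\tau \mapsto \tfrac12(e^{H\tau}+e^{-H\tau}-|e^{\tau/2}-e^{-\tau/2}|^{2H})$ recalled above, which is continuous, integrable and, since $a \ne 0$, not identically zero.

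It remains to identify $\theta_X$ for $X^H = aB^H$. I would argue that the persistence exponent is unaffected by multiplication by a nonzero constant and by the choice of barrier level, both being consequences of $H$-self-similarity: substituting $t = \lambda s$ gives $\sup_{t\in[0,T]} B^H_t \id \lambda^H \sup_{s\in[0,T/\lambda]} B^H_s$, and choosing $\lambda = c^{1/H}$ yields $\Prob(\sup_{t\in[0,T]} B^H_t \le c) = \Prob(\sup_{s\in[0,Tc^{-1/H}]} B^H_s \le 1)$ for every $c>0$, so the factor $c^{-1/H}$ is swallowed by the $o(1)$. Together with the symmetry $-B^H \id B^H$, which reduces the case $a<0$ to $a>0$, Molchan's result \eqref{eq:molchanpersistence} then gives $\theta_X = 1-H$, and Theorem \ref{thm:thetheorem} yields the first assertion.

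For the mixed FBM $M^{H,\alpha} = B^{1/2} + \alpha B^H$, I would split according to the order of the indices $\tfrac12$ and $H$. If $H>\tfrac12$, the first assertion applies with $\alpha B^H$ in the role of the larger-index process $X^H$ and $B^{1/2}$ in the role of $Y^K$, giving exponent $1-H = 1-\max\{\tfrac12,H\}$; if $H<\tfrac12$, the roles are reversed, $B^{1/2}$ carries the larger index, and the exponent is $1-\tfrac12 = 1-\max\{\tfrac12,H\}$. The borderline case $H=\tfrac12$ lies outside Theorem \ref{thm:thetheorem}, which requires $K<H$ strictly; but there $M^{1/2,\alpha}$ is a sum of two independent Brownian motions, hence equal in law to $\sqrt{1+\alpha^2}\,B^{1/2}$, whose persistence exponent is $\tfrac12 = 1-\max\{\tfrac12,H\}$ by \eqref{eq:molchanpersistence} and the scaling argument above.

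I do not anticipate a genuine obstacle, as the corollary is essentially an application of Theorem \ref{thm:thetheorem}. The only points demanding care are the scaling-and-symmetry argument that extracts $\theta_X = 1-H$ for $aB^H$ from Molchan's normalised statement \eqref{eq:molchanpersistence}, the bookkeeping of which summand carries the larger self-similarity index, and the separate treatment of the degenerate case $H=\tfrac12$.
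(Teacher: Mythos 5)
Your proposal is correct and follows the same route as the paper, which presents this corollary as a direct application of Theorem \ref{thm:thetheorem} combined with Molchan's result \eqref{eq:molchanpersistence} and the recalled covariance and Lamperti-transform properties of FBM. In fact you supply details the paper leaves implicit -- the scaling and symmetry argument showing that the exponent of $aB^H$ equals that of $B^H$ with barrier level $1$, and the separate treatment of the borderline case $H=\tfrac12$, where Theorem \ref{thm:thetheorem} does not apply but the mixed process is simply a rescaled Brownian motion -- so nothing is missing.
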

Note that the local behaviour of fractional mixed FBM is completely different: In \cite{VanZanten2007}, it was shown that $a B^H + b B^K$ is locally equivalent to $b B^K$ if and only if $H-K>1/4$.

\paragraph{Completely correlated mixed FBM.}
As mentioned in the introduction, Theorem \ref{thm:thetheorem} also covers the case of completely correlated mixed FBM. Under this term, it was introduced recently in \cite{Dufitinema2021}, while the process itself had already been studied as the driving process of an SDE in \cite[Section~3.2.3]{Mishura2008}. The definition is as follows. Let $B^H$ be an FBM with Hurst parameter $H \in (0,1)$. Then, there exists a Brownian motion $W$ such that
\begin{equation*}
B_t^H=\int_0^t K_H(t,s) \dd W_s, \qquad t \ge 0,
\end{equation*}
where $K_H$ is the so-called Molchan-Golosov kernel, see \cite[Section~5.1.3]{Nualart2006} and \eqref{eq:kernel1} and \eqref{eq:kernel2} below. Completely correlated mixed FBM (ccmFBM) $X^{H,a,b}$ is given by
\begin{equation}\label{eq:defCCMFBM}
X_t^{H,a,b}:=a W_t + b B_t^H, \qquad t \ge 0,
\end{equation}
where $a,b \in \R$ with $ab \neq 0$. Similarly to the fractional mixed FBM, as $K_{1/2} \equiv 1$ (see \eqref{eq:kernel2}), one can generalize $X^{H,a,b}$ to linear combinations $a B^H + b B^K$ of fractional Brownian motions generated by the same Brownian motion $W$ via the Molchan-Golosov kernels $K_H$ and $K_K$ with different Hurst parameters $H$ and $K,$ which were discussed recently in \cite{NualartSoenmez2021} and which we want to call fractional ccmFBM. Using the Itô-isometry, the fractional ccmFBM has the covariance function
\begin{align}
(s,t) \mapsto \ &a^2 \E[B_s^H B_t^H] + b^2 \E[B_s^K B_t^K] \notag
\\&+ ab \int_0^{s \wedge t} (K_H(t,u) K_K(s,u) + K_H(s,u) K_K(t,u)) \dd u. \label{eq:covarianceCCMFBM}
\end{align}
Set $C(H):=\sqrt{\frac {2H \,\Gamma{\left(\frac {3} {2}-H\right)} \,\Gamma{\left(H+\frac {1} {2}\right)}} {\Gamma{\left(2-2H\right)}}}$. Then, for $H>1/2$ and $0<s<t,$ we have
\begin{equation}\label{eq:kernel1}
K_H(t,s)=\frac {C(H)} {\Gamma{\left(H-\frac {1} {2}\right)}} \,s^{\frac {1} {2}-H} \int_s^t u^{H-\frac {1} {2}} (u-s)^{H-\frac {3} {2}} \dd u \ge 0,
\end{equation}
whereas for $H \le 1/2$ and $0<s<t,$ it holds
\begin{align}
&K_H(t,s) \notag
\\&=\frac {C(H)} {\Gamma{\left(H+\frac {1} {2}\right)}} \left((\frac {t^2} {s} - t)^{H-\frac {1} {2}} + (\frac {1} {2} - H) s^{\frac {1} {2} - H} \int_s^t u^{H - \frac {3} {2}} (u-s)^{H - \frac {1} {2}} \dd u\right) \ge 0. \label{eq:kernel2}
\end{align}
Thus, the covariance function of the (fractional) ccmFBM is non-negative, if $ab>0,$ and Theorem \ref{thm:thetheorem} together with \eqref{eq:molchanpersistence} gives the following corollary. 
\begin{corollary}
For $0 < K < H < 1$ and a Brownian motion $W,$ define $B_t^H:=\int_0^t K_H(t,s) \dd W_s$ and $B_t^K:=\int_0^t K_K(t,s) \dd W_s$. Further let $a,b \in \R$ with $ab>0$. Then
\begin{equation*}
\Prob{\left( \sup_{t\in[0,T]} a B_t^H +  b B^K_t \leq 1 \right)} = T^{-(1-H) +o(1)}, \qquad T \to \infty.
\end{equation*}
In particular, for the ccmFBM as defined in \eqref{eq:defCCMFBM}, we have
\begin{equation*}
\Prob{\left( \sup_{t\in[0,T]} X_t^{H,a,b} \leq 1 \right)} = T^{-(1-\max\{\frac {1} {2},H\}) +o(1)}, \qquad T \to \infty.
\end{equation*}
\end{corollary}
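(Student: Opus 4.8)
The plan is to pass to the Lamperti picture and show that the contribution of the lower-order process $Y^K$ is asymptotically negligible, so that the two one-sided exponential rates of the mixed process both coincide with $\theta_X$. First I would discard the initial piece: since $\{\sup_{[0,T]}(X^H+Y^K)\le1\}\subseteq\{\sup_{[1,T]}(X^H+Y^K)\le1\}$ and, by positive association (FKG for the centred Gaussian process $Z:=X^H+Y^K$, whose covariance is non-negative by hypothesis), $\Prob(\sup_{[0,T]}Z\le1)\ge\Prob(\sup_{[0,1]}Z\le1)\,\Prob(\sup_{[1,T]}Z\le1)$, it suffices to work on the horizon $[1,T]$. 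Writing $S=\log T$ and introducing the stationary Lamperti transforms $\hat X_\tau:=e^{-\tau H}X^H_{e^\tau}$, $\hat Y_\tau:=e^{-\tau K}Y^K_{e^\tau}$, the event becomes $\{\tilde Z_\tau\le e^{-\tau H}\text{ for all }\tau\in[0,S]\}$, where $\tilde Z_\tau:=e^{-\tau H}Z_{e^\tau}=\hat X_\tau+P_\tau$ and $P_\tau:=e^{-(H-K)\tau}\hat Y_\tau$. The structural facts I would use are: $\hat X$ is stationary with non-negative covariance; $\tilde Z$ inherits a non-negative covariance from $Z$; and, because $H>K$, the perturbation satisfies $\V(P_\tau)=e^{-2(H-K)\tau}\V(\hat Y_0)\to0$, so $\tilde Z$ is asymptotically stationary with limit $\hat X$. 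Denoting by $\rho(\ell):=-\lim_{S\to\infty}S^{-1}\log\Prob(\sup_{[0,S]}\hat X\le\ell)$ the level-$\ell$ persistence exponent of $\hat X$ (the limit exists by the supermultiplicativity that FKG provides from the non-negative covariance of $X^H$), the theorem reduces to showing that both $\limsup$ and $\liminf$ of $S^{-1}\log\Prob(\sup_{[0,S]}(\tilde Z_\tau-e^{-\tau H})\le0)$ equal $-\theta_X$, together with the identification $\rho(0)=\theta_X$.

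For the upper bound I would drop the constraint on a slowly growing initial window. Fix $\eta>0$ and set $\tau_0=\tau_0(S)=\tfrac{1}{H-K}\log S=o(S)$. Restricting the event to $[\tau_0,S]$ only increases its probability, and on $G_\eta:=\{\sup_{\tau\in[\tau_0,S]}|P_\tau|\le\eta\}$ the constraint forces $\hat X_\tau\le e^{-\tau H}+\eta\le e^{-\tau_0 H}+\eta$ for $\tau\in[\tau_0,S]$. Gaussian tail bounds applied to the exponentially weighted stationary process give $\Prob(G_\eta^c)\le\exp(-c\,\eta^2 e^{2(H-K)\tau_0})=\exp(-c\,\eta^2 S^2)$, negligible against any $e^{-\theta_X S}$. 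Hence, by stationarity of $\hat X$, $\Prob(\sup_{[1,T]}Z\le1)\le\Prob(\sup_{[0,S-\tau_0]}\hat X\le e^{-\tau_0 H}+\eta)+\Prob(G_\eta^c)$, and using monotonicity and continuity of $\rho$ this is $e^{-(\rho(\eta)+o(1))S}$ as $S\to\infty$ (so that $e^{-\tau_0 H}\to0$). Letting $\eta\to0$ yields $\limsup_S S^{-1}\log\Prob(\sup_{[1,T]}Z\le1)\le-\rho(0)$. Running the same argument with $Y^K\equiv0$ is what produces the reverse inequality $\rho(e^{-\tau_0 H})\le\theta_X$ and hence the identification $\rho(0)=\theta_X$.

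For the lower bound I would exploit the non-negative covariance of $Z$ once more. By FKG the persistence probability is supermultiplicative over the disjoint blocks $I_k=[kL,(k+1)L)$, so $\Prob(\sup_{[0,S]}(\tilde Z_\tau-e^{-\tau H})\le0)\ge\prod_{k}p_k$ with $p_k:=\Prob(\tilde Z_\tau\le e^{-\tau H}\text{ on }I_k)$. On $I_k$ the boundary obeys $e^{-\tau H}\ge e^{-(k+1)LH}>0$, so $p_k\ge\Prob(\sup_{I_k}\tilde Z\le0)$; and as $k\to\infty$ the covariance of $\tilde Z$ restricted to $I_k$ converges to that of the stationary $\hat X$ on $[0,L]$ (the $P$-contributions vanish), so a normal comparison argument gives $\liminf_k p_k\ge\Prob(\sup_{[0,L]}\hat X<0)\ge\Prob(\sup_{[0,L]}\hat X\le-\epsilon)$ for every $\epsilon>0$. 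Since the $p_k$ are bounded below by a positive constant, a Cesàro estimate gives $\liminf_S S^{-1}\log\Prob(\sup_{[1,T]}Z\le1)\ge L^{-1}\log\Prob(\sup_{[0,L]}\hat X\le-\epsilon)$; letting $L\to\infty$ produces $-\rho(-\epsilon)$ and then $\epsilon\to0$, by continuity of $\rho$, produces $-\rho(0)=-\theta_X$. Combining the two bounds proves the theorem; the remark on replacing $Y^K$ by a finite sum follows verbatim, since each further summand contributes a perturbation with an even faster exponential decay.

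The step I expect to be the main obstacle is the continuity of the persistence exponent $\rho(\ell)$ of the stationary Gaussian process $\hat X$ at $\ell=0$ (which also yields $\rho(0)=\theta_X$). This is precisely where the hypotheses that the autocovariance of the Lamperti transform of $X^H$ be continuous, integrable and not identically zero are needed: these are the conditions under which the stationary Gaussian persistence machinery (in the spirit of the results of Dembo--Mukherjee and of Feldheim et al.\ discussed in the introduction) guarantees existence, positivity and the required robustness of the exponent, and quantifying this continuity uniformly enough to let $\eta,\epsilon\to0$ after $S\to\infty$ is the delicate point. A secondary technical issue is the justification of the block limit $\liminf_k p_k\ge\Prob(\sup_{[0,L]}\hat X<0)$ through covariance convergence and a normal comparison inequality, handling the supremum functional on càdlàg paths and the boundary value $0$.
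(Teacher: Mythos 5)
There is a genuine gap, and in fact two distinct ones. First, the statement you were asked to prove is the ccmFBM corollary, whose entire content in the paper is the \emph{verification of the hypotheses} of Theorem~\ref{thm:thetheorem} for $X^H:=aB^H$ and $Y^K:=bB^K$ driven by the \emph{same} Brownian motion $W$. Your proposal skips exactly this: you invoke ``positive association (FKG for $Z:=X^H+Y^K$, whose covariance is non-negative by hypothesis)'', but non-negativity of the covariance of the correlated mixture is \emph{not} a hypothesis here — it is the one thing that must be proved, and it is precisely where $ab>0$ enters. The paper proves it via the Itô isometry, which yields the covariance \eqref{eq:covarianceCCMFBM} with cross terms $ab\int_0^{s\wedge t}(K_H(t,u)K_K(s,u)+K_H(s,u)K_K(t,u))\dd u$, and the pointwise non-negativity of the Molchan--Golosov kernels \eqref{eq:kernel1}--\eqref{eq:kernel2}; for $ab<0$ this fails and the theorem does not apply. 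Your proposal also never identifies the exponent: the conclusion $\theta_X=1-H$ requires Molchan's result \eqref{eq:molchanpersistence} for FBM, together with checking that the Lamperti autocovariance of $B^H$ is continuous, integrable and non-zero. None of this appears. (The complete correlation of $B^H$ and $B^K$ is harmless only because Theorem~\ref{thm:thetheorem} makes no independence assumption — a point worth stating, since it is the feature distinguishing this corollary from the independent mixed-FBM one.)

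Second, even read as a from-scratch proof of the underlying general theorem, your argument has a hole at the step you yourself flag as the main obstacle: continuity of the level-dependent exponent $\rho(\ell)$ of the stationary process $\hat X$ at $\ell=0$, which you need from above (letting $\eta\downarrow 0$ in the upper bound) and from below (letting $\epsilon\downarrow 0$ in the lower bound). This is not an off-the-shelf fact you can defer to ``the stationary Gaussian persistence machinery''; the paper's entire technical apparatus exists to \emph{avoid} it. Concretely, the paper never compares levels at all: Lemma~\ref{lem:RKHS} constructs, from the positivity of the spectral density of $\hat X$ near the origin (this is where continuity, integrability and non-nullity of the autocovariance are used), a function $h$ in the RKHS of $X^H$ with $h(t)\sim c\,t^H(\log t)^{\alpha-1}$ and $h\ge 1$ on $[1,\infty)$, and \cite[Proposition~1.6]{AurzadaDereich2013} shows that shifting by $\pm h$ changes the persistence probability only by $T^{o(1)}$; this single device absorbs both the constant level $1$ and the bound $t^\gamma$ on $|Y^K_t|$ (your event $G_\eta$ plays the role of the paper's Lemma~\ref{lem:fbmlarge}, and that part of your sketch is sound). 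Your block/FKG lower bound also contains an unjustified passage — converting covariance convergence on $I_k$ into $\liminf_k p_k\ge\Prob(\sup_{[0,L]}\hat X<0)$ requires a weak-convergence argument for the supremum functional plus anti-concentration at the boundary, and a uniform positive lower bound on the $p_k$ — whereas the paper's lower bound instead iterates the restriction to $[(\log T)^p,T]$ via a fixed-point argument to control $\Prob(\sup_{[0,(\log T)^p]}(X^H+Y^K)\le 1)$. Until the level-continuity of $\rho$ is actually proved (or replaced by the RKHS-shift argument), both of your bounds fail to close.
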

Further important self-similar centred Gaussian processes are integrated FBM and fractionally integrated Brownian motion, also called Riemann-Liouville process. Similarly to mixed FBM, one can define mixed integrated FBM and mixed Riemann-Liouville processes. 

\paragraph{Mixed integrated FBM.}
Let us first consider the case of integrated FBM. For $H \in (0,1),$ let $B^H$ be an FBM. Integrated FBM $I^H$ is then defined as
\begin{equation*}
I_t^H:=\int_0^t B_s^H \dd s, \qquad t \ge 0.
\end{equation*}
As $B^H$ is $H$-self-similar and has a non-negative covariance function, the so-defined $I^H$ is $(1+H)$-self-similar and has again a non-negative covariance function. The autocovariance function of the Lamperti transform of $I^H$ can be found in \cite[Lemma~2]{Molchan2008} and one easily sees that this is indeed a continuous and integrable function. As already mentioned in the context of Theorem \ref{thm:thetheorem}, this guarantees the asymptotic behaviour of the persistence probability of $I^H$ as in \eqref{eq:pppolynomial} with some persistence exponent $\theta_I(H) \in (0,\infty)$. However, the value $\theta_I(H)$ is unknown unless $H=1/2$: For integrated Brownian motion $I^{1/2},$ one could show using Markov techniques that $\theta_I(1/2)=1/4$ (see \cite{Goldman1971}, \cite{Sinai1992RW}, and \cite{IsozakiWatanabe1994}). For the general case, Molchan and Khoklov stated in \cite{MolchanKhoklov2004} the conjecture that $\theta_I(H)=H (1-H)$. In \cite{AurzadaKilian2020}, it was shown that $\theta_I$ is a continuous function and asymptotically equivalent to the conjectured $H (1-H)$ for $H \to 0$ and $H \to 1$. 

Therefore, Theorem \ref{thm:thetheorem} yields the following corollary for mixed integrated FBM. 
\begin{corollary}
For $0 < K < H < 1,$ let $B^H$ and $B^K$ be independent FBMs with Hurst parameters $H$ and $K,$ respectively, and $a,b \in \R$ with $ab \neq 0$. Let $I_t^H=\int_0^t B_s^H \dd s$ and $I_t^K=\int_0^t B_s^K \dd s$. Further let $\theta_I: (0,1) \to (0,\infty)$ denote the persistence exponent of integrated FBM depending on the Hurst parameter. Then
\begin{equation*}
\Prob{\left( \sup_{t\in[0,T]} a I^H_t +  b I^K_t \leq 1 \right)} = T^{-\theta_I(H) +o(1)}, \qquad T \to \infty.
\end{equation*}
\end{corollary}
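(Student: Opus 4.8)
The plan is to pass to the Lamperti transforms and work in logarithmic time, where the $Y^K$-contribution becomes a perturbation that decays as time grows. Write $\widehat X_\tau:=e^{-H\tau}X^H_{e^\tau}$ and $\widehat Y_\tau:=e^{-K\tau}Y^K_{e^\tau}$ for the Lamperti transforms, which by self-similarity are stationary. Setting $t=e^\tau$ gives
\[
\widehat V_\tau:=e^{-H\tau}\bigl(X^H_t+Y^K_t\bigr)=\widehat X_\tau+e^{(K-H)\tau}\widehat Y_\tau,
\]
and since $K<H$ the coefficient $e^{(K-H)\tau}\to0$: on the range $\tau\ge\tau_0$ the process $\widehat V$ is a vanishing perturbation of $\widehat X$. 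The assumptions that the Lamperti autocovariance of $X^H$ is continuous, integrable and non-degenerate, together with $\cov(X^H)\ge0$ (so that $\widehat X$ is associated in the sense of Pitt), guarantee that the stationary level-$u$ persistence exponent
\[
\theta_X(u):=-\lim_{L\to\infty}\tfrac1L\log\Prob\bigl(\widehat X_\tau\le u\ \forall\,\tau\in[0,L]\bigr)
\]
exists in $(0,\infty)$ for each $u$, and a scaling-and-association argument identifies $\theta_X=\theta_X(0)$. I would then reduce the theorem to showing that passing from $\widehat X$ to $\widehat V$ and perturbing the level by a vanishing amount leaves this exponent unchanged. Throughout I fix a slowly growing cutoff $\tau_0:=A\log\log T$ and the ``good'' event $G:=\{|\widehat Y_\tau|\le e^{(H-K)\tau/2}\ \forall\,\tau\ge\tau_0\}$; on $G$ the perturbation obeys $|e^{(K-H)\tau}\widehat Y_\tau|\le e^{(K-H)\tau/2}\le\eta_{\tau_0}:=e^{(K-H)\tau_0/2}\to0$ for $\tau\ge\tau_0$, and by Borell--TIS (valid for càdlàg Gaussian paths) $\Prob(G^c)$ is doubly exponentially small in $\tau_0$, hence $o(T^{-\theta_X})$ once $A$ is large.

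For the upper bound I would estimate $\Prob(\sup_{[0,T]}(X^H+Y^K)\le1)\le\Prob(\widehat V_\tau\le e^{-H\tau}\ \forall\,\tau\in[0,\ell])$ with $\ell:=\log T$, drop the constraint on $[0,\tau_0]$, and note that on $G$ the bound $\widehat V_\tau\le e^{-H\tau}$ forces $\widehat X_\tau\le e^{-H\tau}+\eta_{\tau_0}\le\eta'_{\tau_0}$ with $\eta'_{\tau_0}:=e^{-H\tau_0}+\eta_{\tau_0}\to0$. A Bonferroni split then gives
\[
\Prob\bigl(\widehat V\le e^{-H\tau}\text{ on }[0,\ell]\bigr)\le\Prob\bigl(\widehat X\le\eta'_{\tau_0}\text{ on }[\tau_0,\ell]\bigr)+\Prob(G^c),
\]
and by stationarity the main term equals $\exp(-\theta_X(\eta'_{\tau_0})(\ell-\tau_0)+o(\ell))=T^{-\theta_X+o(1)}$, once one knows $\theta_X(\eta'_{\tau_0})\to\theta_X$.

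For the lower bound I would use that $\cov(X^H+Y^K)\ge0$, so $X^H+Y^K$ is associated, to split
\[
\Prob\bigl(\sup_{[0,T]}(X^H+Y^K)\le1\bigr)\ge\Prob\bigl(\sup_{[0,e^{\tau_0}]}(X^H+Y^K)\le1\bigr)\cdot\Prob\bigl(\sup_{[e^{\tau_0},T]}(X^H+Y^K)\le0\bigr).
\]
The first factor is a persistence probability over the poly-logarithmic interval $[0,(\log T)^A]$; a crude blockwise FKG estimate over its $O(\log\log T)$ unit log-blocks, using $\inf_k\Prob(\sup_{[k,k+1]}\widehat V\le0)>0$, bounds it below by $(\log T)^{-C_0}=T^{-o(1)}$. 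Here the uniform positivity holds because the covariance of $\widehat V|_{[k,k+1]}$ converges to that of $\widehat X|_{[0,1]}$ (the perturbation coefficient vanishes and the cross-covariance of $\widehat X,\widehat Y$ stays bounded), so that no joint stationarity of $(\widehat X,\widehat Y)$ is needed. For the second factor, in log-time it is $\Prob(\widehat V\le0\text{ on }[\tau_0,\ell])$, and on $G$ the implication $\widehat X_\tau\le-\eta_{\tau_0}\Rightarrow\widehat V_\tau\le0$ holds for $\tau\ge\tau_0$; Bonferroni then bounds it below by $\Prob(\widehat X\le-\eta_{\tau_0}\text{ on }[\tau_0,\ell])-\Prob(G^c)=T^{-\theta_X(-\eta_{\tau_0})+o(1)}-o(T^{-\theta_X})$, which is $T^{-\theta_X+o(1)}$ once $\theta_X(-\eta_{\tau_0})\to\theta_X$.

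Both bounds thus collapse to one point, which I expect to be the main obstacle: the continuity of the stationary persistence exponent at the level $0$, i.e.\ $\theta_X(\pm\eta)\to\theta_X(0)$ as $\eta\downarrow0$, so that the vanishing level-perturbations $\pm\eta_{\tau_0}$ and $\eta'_{\tau_0}$ cost only a subexponential factor. This is exactly where the integrability of the Lamperti autocovariance enters, since it supplies the decorrelation underlying the continuity of the level-dependent persistence exponent of a stationary Gaussian process, which I would import from (or adapt the arguments of) the literature on stationary Gaussian persistence. The remaining ingredients—existence and positivity of $\theta_X(u)$ via Fekete and association, the Borell--TIS tails for $\widehat Y$, and the uniform block positivity—are routine by comparison.
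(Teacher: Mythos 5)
Your proposal reproduces the outer architecture of the paper's proof of Theorem~\ref{thm:thetheorem} --- restrict to $[(\log T)^A,T]$, kill $Y^K$ there by a doubly exponential Gaussian estimate (this is exactly Lemma~\ref{lem:fbmlarge}), and paste with Slepian's inequality via the non-negative covariances --- but it funnels the entire difficulty into the one claim you leave unproven: continuity at the origin of the level-dependent persistence exponent $u\mapsto\theta_X(u)$ of the stationary Lamperti transform, which you need three times (to identify the real-time level-$1$ exponent with the log-time level-$0$ exponent, for the upper bound at level $+\eta'_{\tau_0}$, and for the lower bound at level $-\eta_{\tau_0}$). This is a genuine gap, not a routine import. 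Under the standing hypotheses, the spectral density $p$ of $\widehat X$ is only continuous, bounded, and positive \emph{near the origin}; it may vanish elsewhere (e.g.\ the Fej\'er-kernel autocovariance satisfies all assumptions and has compactly supported $p$). In that case a nonzero constant on an interval of positive length is not in the RKHS of $\widehat X$ at all (an RKHS function is the Fourier transform of an $L^2$ function supported in $\mathrm{supp}\,p$, hence entire of exponential type, and cannot equal a nonzero constant on an interval without being that constant everywhere, contradicting square-integrability), so Cameron--Martin/shift arguments give nothing for constant level changes, and continuity of $\theta_X(\cdot)$ at $0$ is precisely the kind of statement that is hard for stationary Gaussian processes; the results in this direction, e.g.\ \cite{FeldheimMukherjee2021}, come with spectral hypotheses that would have to be matched and with proofs whose adaptation is the real work. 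The paper circumvents exactly this point: Lemma~\ref{lem:RKHS} builds, using only frequencies $|x|<x_0$ where $p\ge c_1>0$, a function $h$ in the RKHS of $X^H$ with $h(t)\sim c\,t^H(\log t)^{\alpha-1}$ and $h\ge1$ on $[1,\infty)$; in log time this shift decays only polynomially, hence dominates both the level (which becomes $e^{-H\tau}$) and the $Y^K$-perturbation (which becomes $e^{(\gamma-H)\tau}$), and by \cite[Proposition~1.6]{AurzadaDereich2013} such an RKHS shift costs only $T^{o(1)}$. In fact your vanishing levels $\pm\eta_{\tau_0}$ \emph{are} dominated by this $h$ on $[\tau_0,\ell]$ once $A$ is large, so your scheme can be completed --- but only by proving the paper's key lemma, which is the step your write-up is missing.

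Two further points. First, the statement to be proved is the corollary about integrated FBM, whose proof in the paper consists of verifying the hypotheses of Theorem~\ref{thm:thetheorem} for $X^H:=aI^H$ (which is $(1+H)$-self-similar, not $H$-self-similar) and $Y^K:=bI^K$: non-negativity of the covariances of $I^H$ and of $aI^H+bI^K$ (independence of $B^H$ and $B^K$ is what makes the sign of $ab$ irrelevant), and continuity/integrability of the Lamperti autocovariance of $I^H$ via \cite[Lemma~2]{Molchan2008}, which yields the existence of $\theta_I(H)\in(0,\infty)$; your proposal never touches these verifications, so even granting the continuity claim it would not be a proof of the corollary as stated. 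Second, on the credit side: your treatment of the short interval $[0,(\log T)^A]$ in the lower bound --- blockwise Slepian with a uniform positive bound on $\Prob\bigl(\sup_{[k,k+1]}\widehat V\le0\bigr)$ --- is structurally simpler than the paper's device of iterating \eqref{eq:inducstart-1} with a Banach fixed-point argument, though the uniform block positivity at level $0$ itself requires a support/RKHS argument (negative constants approximable using frequencies near the origin) that you classify, too generously, as routine.
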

Of course, the same result also holds for the integral of (fractional) ccmFBM, again in the case $ab > 0,$ as the only difference in verifying the assumptions of Theorem \ref{thm:thetheorem} is that the covariance function of the mixed process has additional summands. But these are given as the double integral of the additional summands in \eqref{eq:covarianceCCMFBM}, which is again non-negative if $ab>0$. 

\paragraph{Mixed Riemann-Liouville processes.}
As a last example, we want to consider mixed Riemann-Liouville processes, which were introduced in \cite[Section~8]{Cai2016}. For a Brownian motion $W$ and $H>0,$ define
\begin{equation*}
R_t^H:=\int_0^t (t-s)^{H-\frac {1} {2}} \dd W_s,\qquad t \ge 0,
\end{equation*}
to be the Riemann-Liouville fractional integral of $W$. Note that $R^{n+1/2}/n!$ for $n \in \N$ is simply the $n$-times integrated Brownian motion $W$. Thus, the Riemann-Liouville process $R^H$ for $H>0$ is a fractionally integrated Brownian motion. Further note that for $H \in (0,1),$ the process $R^H$ is closely related to FBM $B^H$ via the Mandelbrot-van Ness integral representation, which states that
\begin{equation*}
R_t^H+M_t^H:=\int_0^t (t-s)^{H- \frac {1} {2}} \dd W_s + \int_{-\infty}^0 (t-s)^{H-\frac {1} {2}} - (-s)^{H-\frac {1} {2}} \dd W_s
\end{equation*}
is an independent decomposition of FBM (with a non-normalized variance), see e.g.~\cite[Theorem~1.3.1]{Mishura2008}. The Riemann-Liouville process $R^H$ is an $H$-self-similar centred Gaussian process with a non-negative covariance function. Thus, the persistence probability of $R^H$ decays asymptotically polynomially with some persistence exponent $\theta_R: (0,\infty) \to (0,\infty)$ depending on $H$. However, similarly to $I^H,$ the exact value is unknown except for the Brownian cases $\theta_R(1/2)=1/2$ (Brownian motion) and $\theta_R(3/2)=1/4$ (integrated Brownian motion). In \cite{AurzadaDereich2013}, it was shown that $\theta_R$ is non-increasing, while in \cite{AurzadaKilian2020}, it was proven that $\theta_R$ is continuous, tends to $\infty$ and is in the range $H^{-1}$ to $H^{-2}$ for $H \to 0$. 

The autocovariance function of the Lamperti transform of $R^H$ can be found in \cite[Equation~(12)]{Lim2015} and one assures oneself that this is again a continuous and integrable function. Thus, Theorem \ref{thm:thetheorem} yields the following corollary. 
\begin{corollary}
For $0<K<H$ and independent Brownian motions $W^{(1)}$ and $W^{(2)},$ define $R_t^H:=\int_0^t (t-s)^{H-\frac {1} {2}} \dd W_s^{(1)}$ and $R_t^K:=\int_0^t (t-s)^{K-\frac {1} {2}} \dd W_s^{(2)}$. Let $a,b \in \R$ with $ab \neq 0$ and $\theta_R: (0,\infty) \to (0,\infty)$ denote the persistence exponent of the Riemann-Liouville process depending on the Hurst parameter. Then
\begin{equation*}
\Prob{\left( \sup_{t\in[0,T]} a R^H_t + b R^K_t \leq 1 \right)} = T^{-\theta_R(H) +o(1)}, \qquad T \to \infty.
\end{equation*}
\end{corollary}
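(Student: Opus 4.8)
The plan is to prove matching bounds showing $-\log \Prob(\sup_{t\in[0,T]}M_t\le 1)/\log T \to \theta_X$, where I abbreviate $M:=X^H+Y^K$ and set $\gamma:=H-K>0$. Since $\cov(M)\ge 0$, Pitt's theorem makes $M$ positively associated, so the two decreasing events $\{\sup_{[0,1]}M\le 1\}$ and $\{\sup_{[1,T]}M\le 1\}$ satisfy $\Prob(\sup_{[0,T]}M\le 1)\ge \Prob(\sup_{[0,1]}M\le 1)\,\Prob(\sup_{[1,T]}M\le 1)$; as the first factor is a positive constant and the reverse bound is trivial, it suffices to work on $[1,T]$, i.e.\ in the Lamperti variable $s=\log t$. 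There one has $M_{e^s}=e^{sH}(\widetilde X_s+e^{-\gamma s}\widetilde Y_s)$ with $\widetilde X,\widetilde Y$ the stationary Lamperti transforms, so the contribution of $Y^K$ is damped by $e^{-\gamma s}=t^{-\gamma}\to 0$. Quantitatively, self-similarity and Cauchy--Schwarz give $\V(M_t)=\V(X^H_t)(1+O(t^{-\gamma}))$ and $\cov(M_s,M_t)=\cov(X^H_s,X^H_t)+O((s\wedge t)^{K}(s\vee t)^{H})$, so on a geometric grid $t_i=\rho^{i}$ the \emph{relative} covariance discrepancies between $M$ and $X^H$ behave like $(t_i\wedge t_j)^{-\gamma}$: appreciable only at the $O(1)$ smallest scales and geometrically small beyond. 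This is the structural reason the exponent should be unchanged.

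One of the two bounds is comparatively soft. On the event $G:=\{|Y^K_t|\le \phi(t)\ \forall t\in[1,T]\}$ with $\phi(t):=t^{K}\sqrt{C\log T}$ one has $\{\sup_{[1,T]}M\le 1\}\cap G\subseteq\{X^H_t\le 1+\phi(t)\ \forall t\}$, whence $\Prob(\sup_{[1,T]}M\le 1)\le \Prob(X^H_t\le 1+\phi(t)\ \forall t\in[1,T])+\Prob(G^c)$. Using $\sup_{[2^j,2^{j+1}]}|Y^K|\id 2^{jK}\sup_{[1,2]}|Y^K|$ together with the Borell--TIS inequality, a union bound over the $O(\log T)$ dyadic blocks shows that $\Prob(G^c)$ is smaller than any prescribed power of $T$ once $C$ is large. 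Since the barrier $\phi(t)\asymp t^{K}$ grows strictly slower than the natural scale $t^{H}$ (the subcritical regime $K<H$), raising the level from $1$ to $1+\phi(t)$ leaves the exponent of $X^H$ unchanged, i.e.\ $\Prob(X^H_t\le 1+\phi(t)\ \forall t)=T^{-\theta_X+o(1)}$; this uses only self-similarity and non-negativity of $\cov(X^H)$. Hence $\Prob(\sup_{[1,T]}M\le 1)\le T^{-\theta_X+o(1)}$. Crucially, no assumption on the correlation of $X^H$ and $Y^K$ is needed here, because $Y^K$ enters only through the super-polynomially small quantity $\Prob(G^c)$.

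The reverse inequality $\Prob(\sup_{[1,T]}M\le 1)\ge T^{-\theta_X-o(1)}$ is the crux. The naive choice — forcing $Y^K_t\le \phi(t)$ and hence $X^H_t\le 1-\phi(t)$ — fails, since $1-\phi(t)$ is eventually negative of order $-t^{K}$, and persistence of $X^H$ below such a barrier is strictly costlier. The correct picture is that, conditionally on staying below $1$, the dominant process $X^H_t$ is pushed down to order $-t^{H}$, which overwhelms the $O(t^{K})$ fluctuations of $Y^K$, so adding $Y^K$ should cost nothing in the exponent. To make this rigorous I would compare $M$ and $X^H$ on the geometric grid $\{t_i\}$ via a Slepian-type (normal comparison) inequality: after rescaling each coordinate so that $\V(M_{t_i})=\V(X^H_{t_i})$ — a change that perturbs the level $1$ only by the benign factor $1+O(t_i^{-\gamma})$ — the off-diagonal covariances of the two fields differ by relative amounts $O((t_i\wedge t_j)^{-\gamma})$, which are of order one only at boundedly many scales and decay geometrically beyond. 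Non-negativity of both $\cov(X^H)$ and $\cov(M)$ supplies the sign structure for the comparison, while association is used again to pass between the grid and the continuous supremum.

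The main obstacle is precisely this comparison on the exponential scale. The difficulty is structural: $M$ is not self-similar, so the Lamperti/scaling machinery producing $\theta_X$ is unavailable for $M$ directly, and the exponent must be transferred across the non-self-similar term $Y^K$. Moreover the standard normal comparison inequality is additive and hence too weak against a probability as small as $T^{-\theta_X}$; the error it produces is a sum of covariance discrepancies which, summed over all $O(\log T)$ grid scales and pairs, is a priori of size $O(\log T)$ and would shift the exponent by a constant. The heart of the matter is therefore to show this sum is in fact $o(\log T)$: since the discrepancies are concentrated at the small scales and decay like $t^{-\gamma}$, separating a band of slowly growing scales $t\le A(T)$, with $A(T)\to\infty$, from the bulk renders their total effect sublinear in $\log T$ and thus exponent-neutral. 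Carrying this out rigorously, together with the association-based passage from the grid to the continuous supremum — for which the continuity and integrability of the Lamperti autocovariance of $X^H$ (already underlying the existence of $\theta_X$) are used — is the technical core of the proof.
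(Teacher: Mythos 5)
Your proposal follows the same skeleton as the paper's proof of Theorem~\ref{thm:thetheorem} (cut away small scales, show $Y^K$ is uniformly dominated on the bulk, transfer the exponent of $X^H$ to the mixture), but two of your steps have genuine gaps. The most serious one is that your lower bound is circular at the small scales. After you remove the band $t\le A(T)$ with $A(T)\to\infty$, association gives
\begin{equation*}
\Prob{\left(\sup_{t\in[1,T]} M_t\le 1\right)}\ \ge\ \Prob{\left(\sup_{t\in[1,A(T)]} M_t\le 1\right)}\cdot \Prob{\left(\sup_{t\in[A(T),T]} M_t\le 1\right)},
\end{equation*}
and your grid comparison is meant to handle the second factor; but the first factor is the persistence probability of the \emph{mixed} process on an interval growing with $T$, and you need it to be $T^{-o(1)}$. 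Since $M$ is not self-similar, no Lamperti/subadditivity argument applies to it, and its exponent is exactly what is being proven -- declaring the band ``exponent-neutral'' assumes the conclusion. The paper resolves precisely this point by iterating its lower-bound inequality (replacing $T$ by $(\log T)^p$, then by $(p\log\log T)^p$, and so on) and showing via a Banach fixed-point argument that after roughly $\log\log\log T$ iterations the remaining interval has bounded length, so the leftover factor is a constant. This idea is entirely absent from your proposal, and without it (or a substitute) the lower bound does not close.

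Second, your upper bound rests on the assertion that $\Prob\left(X^H_t\le 1+t^{K}\sqrt{C\log T}\ \forall t\in[1,T]\right)=T^{-\theta_X+o(1)}$ ``uses only self-similarity and non-negativity of $\cov(X^H)$''. That is unsupported and is in fact the hard step: moving the boundary by an unbounded amount is handled in the paper by constructing a function $h$ in the reproducing kernel Hilbert space of $X^H$ with $h(t)\sim c\,t^H(\log t)^{\alpha-1}$ (Lemma~\ref{lem:RKHS}, whose construction is exactly where continuity, integrability and non-degeneracy of the Lamperti autocovariance enter, through the spectral density), combined with \cite[Proposition~1.6]{AurzadaDereich2013}; self-similarity plus positive covariances alone are not known to give such boundary-invariance. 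Relatedly, your comparison step is weaker than you indicate: an additive Berman-type error must be compared with $T^{-\theta_X}$, not with $\log T$, so you implicitly need a multiplicative (Li--Shao-type) inequality, and the ``association-based passage from the grid to the continuous supremum'' produces a product of $O(\log T)$ block probabilities, which shifts the exponent by a constant rather than being exponent-neutral. Finally, for the corollary as stated you should also record the (easy) verifications that the abstract hypotheses hold for $X^H=aR^H$, $Y^K=bR^K$: independence of $W^{(1)}$ and $W^{(2)}$ gives $\cov(M)=a^2\cov(R^H)+b^2\cov(R^K)\ge 0$ for any $ab\neq 0$, the covariance $\int_0^{s\wedge t}(t-u)^{H-1/2}(s-u)^{H-1/2}\dd u$ of $R^H$ is non-negative, and the Lamperti autocovariance of $R^H$ is continuous and integrable; these checks, followed by an application of Theorem~\ref{thm:thetheorem}, constitute the paper's entire proof.
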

Again, in the case $ab>0,$ the same result also holds for the completely correlated mixed Riemann-Liouville process, where $R^H$ and $R^K$ are generated by the same Brownian motion (instead of two independent Brownian motions), as the covariance function of the mixed process gets additional summands which are non-negative.

\section{Proof of the main result}\label{sec:proof}
In this section, we give the proof of Theorem \ref{thm:thetheorem}. The main idea is as follows. We restrict the interval $[0,T]$ of persistence to an interval $[a(T),T],$ where $a(T)$ has to be small enough such that the asymptotic order of the persistence probability does not change and large enough such that we are able to control the range of the process $Y^K$ on the interval $[a(T),T]$. It turns out that $a(T):=(\log T)^p$ for $p$ large enough is a suitable choice. The following lemma shows that the probability that $Y_t^K$ exceeds $t^\gamma$ for $\gamma>K$ on the interval $[a(T),T]$ is of neglectable order. 
\begin{lemma}\label{lem:fbmlarge}
Let $Y^K$ be as in Theorem \ref{thm:thetheorem}, $\theta \ge 0, \ \gamma > K$ and $\delta > 0$. Then there is a $p \ge e^2$ such that for $T$ large enough, it holds
\begin{equation*}
\Prob{\left( \exists t\in [ (\log T)^p,T] \, : \, \left|Y^K_t\right| >  t^\gamma\right)} \leq T^{-\theta - \delta}.
\end{equation*}
\end{lemma}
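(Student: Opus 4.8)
The plan is to bound the probability of the rare event by a union bound over a dyadic (or finer) decomposition of the time interval $[(\log T)^p, T]$, using on each piece the self-similarity of $Y^K$ together with a maximal inequality and a Gaussian tail estimate. The key structural fact I would exploit is that $Y^K$ is $K$-self-similar with càdlàg paths, so on a block $[c, 2c]$ the supremum $\sup_{t \in [c,2c]} |Y^K_t|$ has the same distribution as $c^K \sup_{t \in [1,2]} |Y^K_t|$, and the threshold $t^\gamma$ on that block is at least $c^\gamma$. Since $\gamma > K$, the ratio $c^\gamma / c^K = c^{\gamma - K}$ grows with $c$, which is exactly the gap that will make the tails summable and will let the lower endpoint $(\log T)^p$ drive the bound below $T^{-\theta-\delta}$.

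\medskip

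First I would fix a finite constant $M := \E\bigl[\sup_{t \in [1,2]} |Y^K_t|^2\bigr]^{1/2}$ or, better, work with the full tail. The random variable $Z := \sup_{t \in [1,2]} |Y^K_t|$ is the supremum of a centred Gaussian process over a compact set, so by the Borell--TIS inequality it has Gaussian tails: there exist constants such that $\Prob(Z > x) \le \exp(-c x^2)$ for $x$ large. Next I would cover $[(\log T)^p, T]$ by blocks $[2^k, 2^{k+1}]$ for $k$ ranging from $k_0 := \lfloor \log_2 (\log T)^p \rfloor$ up to $k_1 := \lceil \log_2 T \rceil$. On the $k$-th block, self-similarity gives
\begin{equation*}
\Prob\Bigl( \exists t \in [2^k, 2^{k+1}] : |Y^K_t| > t^\gamma \Bigr) \le \Prob\Bigl( 2^{kK} Z > 2^{k\gamma} \Bigr) = \Prob\bigl( Z > 2^{k(\gamma - K)} \bigr) \le \exp\bigl(-c\, 2^{2k(\gamma - K)}\bigr).
\end{equation*}
Summing the union bound over $k_0 \le k \le k_1$, the series is dominated by its first term (the tail is super-exponentially decaying in $k$ since $\gamma > K$), so the whole sum is of order $\exp(-c\, 2^{2k_0(\gamma-K)})$. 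Because $2^{k_0} \approx (\log T)^p$, this is roughly $\exp\bigl(-c\, (\log T)^{2p(\gamma-K)}\bigr)$.

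\medskip

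Finally I would choose $p$ large enough that this dominating term falls below $T^{-\theta - \delta} = \exp\bigl(-(\theta+\delta)\log T\bigr)$. Since $(\log T)^{2p(\gamma - K)}$ beats $\log T$ as soon as $2p(\gamma - K) > 1$, any $p > \tfrac{1}{2(\gamma - K)}$ (and $p \ge e^2$ to meet the stated requirement) suffices, and the estimate then holds for all $T$ large. The \textbf{main obstacle} I anticipate is handling the Gaussian tail of the supremum cleanly: strictly one needs finiteness of $\E[Z]$ and the concentration bound, which requires a.s.\ boundedness of $Y^K$ on $[1,2]$. Since the paper only assumes càdlàg paths rather than continuity, I would either invoke the fact that a càdlàg path on a compact interval is bounded (so $Z < \infty$ a.s.) together with the general Borell--TIS-type tail bound for suprema of Gaussian processes, or — if one prefers to avoid measurability subtleties with the supremum — replace the block supremum by a supremum over a countable dense set and pass to the limit using right-continuity. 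A secondary point to check is that the constant $c$ in the Gaussian tail is uniform across blocks, which it is precisely because self-similarity reduces every block to the \emph{same} reference process on $[1,2]$.
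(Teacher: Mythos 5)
Your proof is correct, and it rests on the same three pillars as the paper's argument --- a union bound over blocks covering $[(\log T)^p,T]$, self-similarity to rescale each block to a fixed reference process on $[1,2]$, and a Gaussian large-deviation bound for the supremum --- but your decomposition is genuinely different and arguably cleaner. The paper uses unit-length blocks $[s,s+1]$ for integer $s$; under self-similarity such a block rescales to $[1,1+s^{-1}]$ rather than to a fixed interval, so the paper must first split off the value at the left endpoint via the reverse triangle inequality (estimated by a one-dimensional Gaussian tail) and then separately control the increment supremum $\sup_{t\in[1,2]}\left(Y^K_t-Y^K_1\right)$ by the large-deviation theorem for bounded Gaussian random functions \cite[Theorem~12.1]{Lifshits1995}. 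Your dyadic blocks $[2^k,2^{k+1}]$ rescale \emph{exactly} to $[1,2]$, so a single application of Borell--TIS to $Z=\sup_{t\in[1,2]}\left|Y^K_t\right|$ does all the work, and your union bound involves only $O(\log T)$ terms rather than $O(T)$ (immaterial for the final estimate, since both bounds decay faster than any polynomial, but structurally tidier). The two technical points you flag are exactly the right ones and you resolve them correctly: càdlàg paths are bounded on compact intervals, so $Z<\infty$ a.s., hence $\E[Z]<\infty$ and Borell--TIS applies (to the centred family $\{\pm Y^K_t\}$, or via symmetry of the centred Gaussian law), while measurability of the supremum follows from right-continuity and a countable dense set; and the tail constant is block-independent precisely because every block reduces to the same reference variable $Z$. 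Note finally that your requirement $2p(\gamma-K)>1$ is even slightly weaker than the paper's choice $p=\max\{1/(\gamma-K),e^2\}$; both are admissible, since the lemma only asserts the existence of some $p\ge e^2$.
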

\begin{proof}
We estimate
\begin{align}
\Prob{\left( \exists t\in [ (\log T)^p,T] \, : \, \left|Y^K_t\right| >  t^\gamma\right)} &\le \sum_{s=\lfloor (\log T)^p \rfloor}^{\lfloor T \rfloor} \Prob{\left( \exists t \in [s,s+1] \, : \, \left|Y^K_t\right| >  t^\gamma\right)} \notag
\\&\le \sum_{s=\lfloor (\log T)^p \rfloor}^{\lfloor T \rfloor} \Prob{\left( \sup_{t \in [s,s+1]} \left|Y^K_t\right| >  s^\gamma\right)}. \label{eq:sum}
\end{align}
For $s = \lfloor (\log T)^p \rfloor, \dots, \lfloor T \rfloor$ and $\sigma_K^2:=\V[Y_1^K] \vee \sup_{t \in [1,2]} \V[Y_t^K-Y_1^K],$ we may further estimate
\begin{align}
&\Prob{\left( \sup_{t \in [s,s+1]} \left|Y^K_t\right| >  s^\gamma\right)} \notag
\\&\le \Prob{\left( \left|Y^K_s\right| >  \frac {s^\gamma} {2}\right)} + \Prob{\left( \sup_{t \in [s,s+1]} \left|Y^K_t\right| >  s^\gamma, \ \left|Y^K_s\right| \le  \frac {s^\gamma} {2}\right)} \notag
\\&\le \Prob{\left(|\mathcal{N}(0,1)| > \frac {s^{\gamma - K}} {2 \,\sigma_K}\right)} + \Prob{\left( \sup_{t \in [s,s+1]} \left|Y^K_t\right| - \left|Y^K_s\right| >  \frac {s^\gamma} {2}\right)} \notag
\\&\le c_1 \,e^{-s^{2 (\gamma-K)}/(8 \,\sigma_K^2)} + \Prob{\left( \sup_{t \in [s,s+1]} \left|Y^K_t - Y^K_s\right| >  \frac {s^\gamma} {2}\right)} \notag
\\&= c_1 \,e^{-s^{2 (\gamma-K)}/(8 \,\sigma_K^2)} + \Prob{\left( \sup_{t' \in [1,1+s^{-1}]} \left|Y^K_{t'} - Y^K_1\right| >  \frac {s^{\gamma-K}} {2}\right)} \label{eq:supremum}
\end{align}
for some constant $c_1 > 0$ and $T$ large enough, where we used self-similarity of $Y^K$ in the second, the reverse triangle inequality in the third, and again self-similarity in the fourth step. 

Now, we estimate the probability in \eqref{eq:supremum} as follows:
\begin{align}
&\Prob{\left( \sup_{t' \in [1,1+s^{-1}]} \left|Y^K_{t'} - Y^K_1\right| >  \frac {s^{\gamma-K}} {2}\right)} \le \Prob{\left( \sup_{t' \in [1,2]} \left|Y^K_{t'} - Y^K_1\right| >  \frac {s^{\gamma-K}} {2}\right)} \notag
\\&\le \Prob{\left( \sup_{t' \in [1,2]} \left(Y^K_{t'} - Y^K_1\right) >  \frac {s^{\gamma-K}} {2}\right)} + \Prob{\left( \sup_{t' \in [1,2]} \left(Y^K_1 - Y^K_{t'}\right) >  \frac {s^{\gamma-K}} {2}\right)} \notag
\\&=2 \,\Prob{\left( \sup_{t \in [1,2]} \left(Y^K_t - Y^K_1\right) >  \frac {s^{\gamma-K}} {2}\right)}. \label{eq:modulus}
\end{align}
The last probability is a probability of large deviation of a bounded Gaussian random function and can therefore be estimated by the tail of a one-dimensional Gaussian distribution. 

More precisely, by e.g.~\cite[Theorem~12.1]{Lifshits1995}, there exist constants $c_2>0$ and $d \in \R$ such that
\begin{equation*}
\Prob{\left( \sup_{t \in [1,2]} \left(Y^K_t - Y^K_1\right) >  \frac {s^{\gamma-K}} {2}\right)} \le c_2 \,e^{s^{\gamma-K}/2 - \left(s^{\gamma-K}/2 + d\right)^2/(2 \sigma_K^2)}.
\end{equation*}
Together with \eqref{eq:modulus} and \eqref{eq:supremum}, this yields for $s = \lfloor (\log T)^p \rfloor, \dots, \lfloor T \rfloor$:
\begin{align*}
\Prob{\left( \sup_{t \in [s,s+1]} \left|Y^K_t\right| >  s^\gamma\right)} &\le e^{-s^{2 (\gamma-K)}/(8 \,\sigma_K^2)+c_3 s^{\gamma-K}} 
\\&\le e^{-(\log T)^{2 (\gamma-K) p}/(8 \,\sigma_K^2)+c_0 (\log T)^{(\gamma-K) p}}
\end{align*}
for constants $c_3, c_0 > 0$. Combining this with \eqref{eq:sum}, we get
\begin{align*}
&\Prob{\left( \exists t\in [ (\log T)^p,T] \, : \, \left|Y^K_t\right| >  t^\gamma\right)} 
\\&\le (T-(\log T)^p+2) \,e^{-(\log T)^{2 (\gamma-K) p}/(8 \,\sigma_K^2)+c_0 (\log T)^{(\gamma-K) p}}.
\end{align*}
Taking e.g.~$p=\max\{1/(\gamma-K), e^2\},$ the right-hand-side decays faster than any polynomial, which shows the assertion. 
\end{proof}
Thus, we can estimate the persistence probability of $X^H+Y^K$ on $[a(T),T]$ by the persistence probability of $X^H$ shifted by $t \mapsto t^\gamma$ on $[a(T),T]$. Shifting a Gaussian process by a deterministic function does not change the asymptotic order of the persistence probability if the function belongs to the reproducing kernel Hilbert space (RKHS) of the process, see \cite[Proposition~1.6]{AurzadaDereich2013}. Thus, we have to estimate $t \mapsto t^{\gamma}$ on $[a(T),T]$ by a function in the RKHS of $X^H,$ which works for $\gamma<H$. The following lemma provides such a function in the RKHS of $X^H$. 
\begin{lemma}\label{lem:RKHS}
Let $X^H$ be as in Theorem \ref{thm:thetheorem}. Then, for every $\alpha \in (0,1/2),$ there exists a function $h$ in the RKHS of $X^H$ satisfying $h(t) \sim c \,t^H (\log t)^{\alpha - 1}$ for $t \to \infty$ and some $c > 0$ as well as $h(t) \ge 1$ for $t \ge 1$. 
\end{lemma}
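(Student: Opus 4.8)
The plan is to pass to the Lamperti transform of $X^H$, construct the desired function at the level of the associated stationary process, and then transform back. Write $Z_\tau := e^{-\tau H} X^H_{e^\tau}$ for the Lamperti transform, a stationary centred Gaussian process with autocovariance $\rho(\tau) := \cov(X_1^H, e^{-\tau H} X^H_{e^\tau})$. By self-similarity one has $X^H_t = t^H Z_{\log t}$, so the closed $L^2$-spans of $\{X^H_t : t>0\}$ and $\{Z_\tau : \tau \in \R\}$ coincide. Consequently $g \mapsto (t \mapsto t^H g(\log t))$ is an isometric bijection from the RKHS of $Z$ onto the RKHS of $X^H$: every RKHS element of $Z$ has the form $g(\tau)=\E[Z_\tau W]$ for some $W$ in that common span, and then $t^H g(\log t) = \E[X^H_t W]$ is the corresponding RKHS element of $X^H$. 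Hence it suffices to find $g$ in the RKHS of $Z$ with $g(\tau) \sim c\,\tau^{\alpha-1}$ as $\tau \to \infty$ and $g(\tau) \ge e^{-H\tau}$ for $\tau \ge 0$; setting $h(t):=t^H g(\log t)$ then yields the claim, since $g(\log t) \ge e^{-H\log t}=t^{-H}$ is exactly $h(t) \ge 1$.

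The essential structural input is that non-negativity of the covariance of $X^H$ forces $\rho \ge 0$: from $\cov(X^H_s, X^H_t) = s^H t^H \rho(\log(t/s)) \ge 0$ and $s,t>0$ we get $\rho(\tau) \ge 0$ for all $\tau$. Together with continuity, integrability and $\rho \not\equiv 0$, this makes the spectral density $f:=\frac{1}{2\pi}\widehat{\rho}$ continuous and bounded with $f(0)=\frac{1}{2\pi}\int_\R \rho(\tau)\dd\tau>0$, the integral being strictly positive because $\rho$ is continuous, non-negative and not identically zero. I would then build $g$ by convolution: fix an even, strictly positive $\psi \in L^2(\R)$ that is regularly varying with $\psi(\sigma) \sim |\sigma|^{\alpha-1}$ as $|\sigma| \to \infty$ (e.g.\ $\psi(\sigma)=(1+\sigma^2)^{(\alpha-1)/2}$, which lies in $L^2$ precisely because $\alpha<1/2$), and set $g_0:=\rho * \psi$. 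Since $g_0(\tau)=\E[Z_\tau \int_\R Z_\sigma\psi(\sigma)\dd\sigma]$ and $\E[(\int_\R Z_\sigma\psi(\sigma)\dd\sigma)^2]=\int_\R |\widehat\psi|^2 f \le \|f\|_\infty\|\psi\|_2^2<\infty$, the function $g_0$ lies in the RKHS of $Z$; this is where the hypothesis $\alpha\in(0,1/2)$ enters, as it is exactly the threshold for $\psi\in L^2$ and hence for RKHS membership.

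It remains to read off the asymptotics and the lower bound. By the standard convolution asymptotic for a regularly varying function against an $L^1$ kernel, $g_0(\tau)=\int_\R\rho(u)\psi(\tau-u)\dd u \sim \psi(\tau)\int_\R\rho(u)\dd u = 2\pi f(0)\,\tau^{\alpha-1}$ as $\tau\to\infty$, so $g_0(\tau)\sim c_0\tau^{\alpha-1}$ with $c_0=2\pi f(0)>0$. Moreover $g_0$ is continuous and, since $\rho\ge0$, $\rho\not\equiv0$ and $\psi>0$, one has $g_0(\tau)>0$ for every $\tau$. Because $c_0\tau^{\alpha-1}$ dominates $e^{-H\tau}$, there is $\tau_1$ with $g_0(\tau)\ge e^{-H\tau}$ for $\tau\ge\tau_1$; choosing $M\ge\max\{1,(\min_{[0,\tau_1]}g_0)^{-1}\}$ and putting $g:=Mg_0$ gives $g(\tau)\ge 1\ge e^{-H\tau}$ on $[0,\tau_1]$ and $g(\tau)\ge g_0(\tau)\ge e^{-H\tau}$ on $[\tau_1,\infty)$, while preserving $g(\tau)\sim c\,\tau^{\alpha-1}$ with $c=Mc_0>0$ and RKHS membership (the RKHS is a vector space). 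Transforming back via $h(t)=t^H g(\log t)$ completes the proof. The main obstacle is the deduction $f(0)>0$ and its use in identifying the limiting constant: it is the non-negativity assumption on the covariance of $X^H$ (not any direct spectral hypothesis) that guarantees a strictly positive density at the origin, and pairing this with the sharp $L^2$-integrability threshold $\alpha<1/2$ is what produces the polynomial-in-$\log$ correction $(\log t)^{\alpha-1}$.
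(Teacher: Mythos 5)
Your overall architecture coincides with the paper's: Lamperti transform, the RKHS correspondence $g \mapsto \left(t \mapsto t^H g(\log t)\right)$ via a representing random variable in the common $L^2$-span, positivity of the spectral density at the origin deduced from $\rho \ge 0$, $\rho \not\equiv 0$, integrability, and a final rescaling to get $h \ge 1$ on $[1,\infty)$. The gap is in the one step that produces the asymptotics: the claim that $g_0 = \rho * \psi$ satisfies $g_0(\tau) \sim \psi(\tau) \int_\R \rho(u) \dd u$ as $\tau \to \infty$. That convolution asymptotic is valid when the kernel $\rho$ has compact support, or when its tail is dominated suitably by $\psi$ (say $\int_{|u|>\tau/2} \rho(u) \dd u = o(\tau^{\alpha-1})$), but it is false for a general continuous, non-negative, integrable $\rho$, and nothing in the hypotheses of Theorem \ref{thm:thetheorem} excludes the bad cases. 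Indeed, the contribution to the convolution from $u$ near $\tau$ gives $g_0(\tau) \ge \int_{|u-\tau|\le 1} \rho(u)\, \psi(\tau - u) \dd u \ge \psi(1) \int_{\tau-1}^{\tau+1} \rho(u) \dd u$, which need not be $o(\tau^{\alpha-1})$. Concretely, take the spectral density $f(x) = g(x)\bigl(1 + \sum_{n\ge1} b_n \cos(2^n x)\bigr)$ with $g$ a Gaussian density and $b_n$ proportional to $n^{-2}$ with $\sum_n b_n < 1$; then $\rho(\tau) = \widehat g(\tau) + \sum_n \tfrac{b_n}{2}\bigl(\widehat g(\tau - 2^n) + \widehat g(\tau + 2^n)\bigr)$ is continuous, non-negative, integrable, not zero, and $\E[(Z_\tau - Z_0)^2] \le C|\tau|$, so the associated $H$-self-similar process has a continuous modification and satisfies all assumptions on $X^H$. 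But $\rho$ has bumps of height of order $n^{-2}$ at $\tau = 2^n$, so $g_0(2^n) \gtrsim (\log 2^n)^{-2}$, which is far larger than $\psi(2^n)\int\rho \asymp 2^{n(\alpha-1)}$: your $g_0$ fails to satisfy $g_0(\tau) \sim c_0 \tau^{\alpha - 1}$, hence $h$ fails the required asymptotics $h(t) \sim c\, t^H (\log t)^{\alpha-1}$.

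The structural reason is visible on the spectral side: your construction corresponds to the multiplier $\varphi = \widehat\psi$, i.e.\ $g_0(\tau) = \int_\R \widehat\psi(x)\, e^{-i\tau x} f(x) \dd x$, which keeps the full spectral density $f$ in the integrand; the large-$\tau$ decay of $g_0$ is then governed by the global regularity of $f$, which under the theorem's assumptions is merely continuous, not by its value at $0$ alone. The paper avoids exactly this by taking $\varphi_1(x) = \mathbbm{1}_{|x|<x_0}\, |x|^{-\alpha}/p(x)$: dividing out the density and cutting off to a neighbourhood of the origin gives $\tilde h_1(\tau) = 2\int_0^{x_0} x^{-\alpha} \cos(\tau x) \dd x = 2\tau^{\alpha-1}\int_0^{\tau x_0} y^{-\alpha}\cos(y) \dd y \sim c_0 \tau^{\alpha-1}$ identically, with no dependence on the behaviour of $f$ away from $0$. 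The price is that $\tilde h_1$ may be negative for small $\tau$, which the paper repairs by adding a second, everywhere non-negative, $O(\tau^{-2})$-decaying RKHS function $\tilde h_2$ of Fej\'er type (a convolution square on the spectral side) — a complication your $g_0$ elegantly avoided (it is automatically positive since $\rho \ge 0$, $\psi > 0$), but that advantage is moot once the asymptotics fail. If you want to keep a convolution formulation, you would have to convolve $\psi$ against a kernel whose spectral content is $\mathbbm{1}_{|x|<x_0}/p(x)$ rather than against $\rho$ itself, which is the paper's construction in disguise.
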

\begin{proof}
Let $\alpha \in (0,1/2)$ and $Z^H(\tau):=e^{-\tau H} X_{e^\tau}^H, \ \tau \in \R,$ be the Lamperti transform of $X^H$. We will show below that there is a function $\tilde{h}$ in the RKHS of $Z^H$ satisfying 
\begin{equation}\label{eq:htilde}
\tilde{h}(\tau) \sim c_0 \tau^{\alpha-1} \text{ for } \tau \to \infty \text{ and some } c_0>0 \text{ as well as } \tilde{h}(\tau) > 0 \text{ for } \tau \ge 0. 
\end{equation}
Then, by the definition of the RKHS, there exists a random variable $\xi$ in the $L^2$-closure of $\Span\{Z^H_\tau\colon \tau \in \R\}=\Span\{X^H_t\colon t > 0\}$ such that $\tilde{h}(\tau)=\E[\xi Z^H_\tau]$. Plugging in the definition of $Z^H,$ this gives $e^{\tau H} \tilde{h}(\tau)=\E[\xi X^H_{e^\tau}]$ for $\tau \in \R$ and $h_0(t):=t^H \tilde{h}(\log t)=\E[\xi X^H_t]$ for $t > 0$. So, by definition, the function $h_0$ is an element of the RKHS of $X^H,$ a continuous function (by the continuity of the covariance function) and satisfies $h_0(t) \sim c_0 \,t^H (\log t)^{\alpha-1}$ for $t \to \infty$ as well as $h_0(t) > 0$ for all $t \ge 1$. In particular, we have $h_0(t) \to \infty$ for $t \to \infty$. Thus, there exists $t_0 > 1$ such that $h_0(t) \ge 1$ for $t \ge t_0$. Setting $h:=h_0/(\min_{t \in [1,t_0]} h_0(t) \wedge 1)$ yields the assertion for $c:=c_0/(\min_{t \in [1,t_0]} h_0(t) \wedge 1)$. 

So let us show \eqref{eq:htilde}. Due to the continuity of the covariance function of $Z^H,$ by Bochner's theorem, the spectral measure of the stationary process $Z^H$ exists, i.e.~the finite measure $\mu$ that satisfies
\begin{equation}\label{eq:spectral}
r(\tau):=\E[Z^H_0 Z^H_\tau]=\int_\R e^{i \tau x} \dd \mu(x), \qquad \tau \in \R.
\end{equation}
Recall that in this case, a function $\tilde{h}$ is in the RKHS of $Z^H$ if and only if there is a function $\varphi \in L^2(\mu)$ such that $\tilde{h}(\tau)=\int_\R \varphi(x) e^{-i \tau x} \dd \mu(x),$ see e.g.~\cite[Comment~2.2.2(c)]{AshGardner1975}. 

Note that \eqref{eq:spectral} states that $r$ is the characteristic function of the (finite) measure $\mu$. As $r$ is integrable by assumption, the inversion theorem for characteristic functions gives $\dd \mu(x)=p(x) \dd x$ on $\R$ with the density
\begin{equation*}
p(x)=\frac {1} {2 \pi} \int_\R e^{-i \tau x} r(\tau) \dd \tau \to \frac {1} {2 \pi} \int_\R r(\tau) \dd \tau=:2 \,c_1 \in (0,\infty), \qquad x \to 0,
\end{equation*}
where $c_1>0$ is due to the non-negativity of $r$. In particular, there exists $x_0 > 0$ such that $p(x) \ge c_1$ for $|x|<x_0$. 

Now, similary to the proof of \cite[Proposition~5]{AurzadaBuck2018}, we will first construct a function $\tilde{h}_1$ in the RKHS of $Z^H$ with the desired asymptotic behaviour, which unfortunately may attain non-positive values up to some $\tau_0>0$. Afterwards, we will show the existence of another function $\tilde{h}_2$ in the RKHS of $Z^H$ which is non-negative on $[0,\infty),$ even positive on $[0,\tau_0]$ and decays faster than $\tilde{h}_1$. Then, the function $\tilde{h}:=\tilde{h}_1 + 2 \max_{\tau \in [0,\tau_0]} |\tilde{h}_1(\tau)| / \min_{\tau \in [0,\tau_0]} \tilde{h}_2(\tau) \cdot \tilde{h}_2$ satisfies condition \eqref{eq:htilde}. 

{\it Construction of $\tilde{h}_1$:} 
We set $\varphi_1(x):=\mathbbm{1}_{|x|<x_0} \,|x|^{-\alpha}/p(x)$. Then, we have
\begin{equation*}
\int_\R \varphi_1^2(x) \dd \mu(x)=2 \int_0^{x_0} x^{-2 \alpha}/p(x) \dd x \le 2 \,c_1^{-1} \int_0^{x_0} x^{- 2 \alpha} \dd x < \infty
\end{equation*}
as $\alpha < 1/2$. Thus $\varphi_1 \in L^2(\mu)$. For the $\tilde{h}_1$ corresponding to $\varphi_1,$ we get
\begin{align}\label{eq:asymptotics}
\tilde{h}_1(\tau)&=\int_\R \varphi_1(x) e^{-i \tau x} \dd \mu(x)=\int_\R \varphi_1(x) \cos(\tau x) \dd \mu(x) \notag
\\&=2 \int_0^{x_0} x^{-\alpha} \cos(\tau x) \dd x = 2 \tau^{\alpha-1} \int_0^{\tau x_0} y^{-\alpha} \cos(y) \dd y \sim c_0 \tau^{\alpha-1}
\end{align}
for $c_0:=2 \int_0^\infty y^{-\alpha} \cos(y) \dd y$ and $\tau \to \infty$. Note that due to $\alpha<1,$ the fact that $\cdot^{-\alpha}$ is decreasing and fulfills $\lim_{x \to \infty} x^{-\alpha}=0,$ the fact that the integrals of $\cos(\cdot)$ over any interval are uniformly bounded and Dirichlet's test, the integral in the definition of $c_0$ exists and is positive. Further note that this fails for $\alpha \le 0$. 

{\it Construction of $\tilde{h}_2$:}
Due to the asymptotic behaviour of $\tilde{h}_1,$ there exists $\tau_0 > \pi/x_0$ such that $\tilde{h}_1(\tau) > 0$ for $\tau \ge \tau_0$. Let $g\colon \R \to [0,\infty)$ be a smooth even function with $g(x) > 0$ for $|x| < \pi/(2 \tau_0)$ and $g(x)=0$ otherwise. 

We set $f:=g \ast g$ and $\varphi_2(x):=\mathbbm{1}_{|x|<\pi/\tau_0} \,f(x)/p(x)$. Then $\varphi_2 \in L^2(\mu)$ as
\begin{equation*}
\int_\R \varphi_2^2(x) \dd \mu(x)=\int_{-\pi/\tau_0}^{\pi/\tau_0} f^2(x)/p(x) \dd x \le \frac {2 \pi \max_{x \in [-\pi/\tau_0,\pi/\tau_0]} f^2(x)} {c_1 \tau_0}  < \infty,
\end{equation*}
where we used that $\pi/\tau_0 < x_0$. Note that by definition of $f$ and $g,$ we have $f(x)=0$ for $|x| \ge \pi/\tau_0$. Thus, the $\tilde{h}_2$ corresponding to $\varphi_2$ fulfills
\begin{align*}
\tilde{h}_2(\tau)&=\int_\R \varphi_2(x) e^{-i \tau x} \dd \mu(x)=\int_\R f(x) e^{-i \tau x} \dd x
\\&=\left(\int_\R g(x) e^{-i \tau x} \dd x\right)^2=\left(\int_\R g(x) \cos(\tau x) \dd x\right)^2 \begin{cases}
>0, &\text{if } |\tau| \le \tau_0,
\\\ge 0, &\text{otherwise,}
\end{cases}
\end{align*}
where we used in the second line that the Fourier transform of a convolution is given by the product of the Fourier transforms of the convoluted functions as well as that $g$ vanishes outside of $(-\pi/(2 \tau_0), \pi/(2 \tau_0))$ by definition. Furthermore, by integration by parts, we have
\begin{align*}
\tilde{h}_2(\tau)=\int_\R f(x) e^{-i \tau x} \dd x&=\frac {1} {(i \tau)^2} \int_\R f''(x) e^{-i \tau x} \dd x 
\\&\le \frac {2 \pi \max_{x \in [-\pi/\tau_0,\pi/\tau_0]} |f''(x)|} {\tau_0} \cdot \tau^{-2}. \qedhere
\end{align*}
\end{proof}

\begin{proof}[Proof of Theorem~\ref{thm:thetheorem}]
First observe that the persistence probability of $X^H$ behaves as in \eqref{eq:pppolynomial} with some persistence exponent $\theta_X \in [0,\infty)$. Indeed, as $X^H$ is a self-similar centred Gaussian process with a non-negative covariance function by assumption, its Lamperti transform $Z^H(\tau):=e^{-\tau H} X_{e^\tau}^H, \ \tau \in \R,$ is a stationary centred Gaussian process with a non-negative correlation function. By subadditivity and Slepian's lemma, this yields
\begin{equation*}
\Prob{\left( \sup_{t\in[1,T]} X^H_t \leq 0 \right)} = \Prob{\left( \sup_{\tau\in[0,\log T]} Z^H_\tau \leq 0 \right)} = T^{-\theta_X + o(1)}, \qquad T \to \infty,
\end{equation*}
for some $\theta_X \in [0,\infty)$. Consider the function $h$ in the RKHS of $X^H$ given by Lemma \ref{lem:RKHS} for $\alpha:=1/4$. On the one hand, we know by Slepian's lemma that
\begin{align*}
\Prob{\left( \sup_{t\in[0,T]} X^H_t \leq 1 \right)} &\ge \Prob{\left( \sup_{t\in[0,1]} X^H_t \leq 1 \right)} \cdot \Prob{\left( \sup_{t\in[1,T]} X^H_t \leq 1 \right)}
\\&\ge \Prob{\left( \sup_{t\in[0,1]} X^H_t \leq 1 \right)} \cdot \Prob{\left( \sup_{t\in[1,T]} X^H_t \leq 0 \right)},
\end{align*}
while on the other hand, the fact that $h(t) \ge 1$ for $t \ge 1$ together with \cite[Proposition~1.6]{AurzadaDereich2013} yields that
\begin{align*}
\Prob{\left( \sup_{t\in[0,T]} X^H_t \leq 1 \right)} \le \Prob{\left( \sup_{t\in[1,T]} X^H_t \leq 1 \right)} &\le \Prob{\left( \sup_{t\in[1,T]} X^H_t+h(t) \leq 1 \right)} \,T^{o(1)}
\\&\le \Prob{\left( \sup_{t\in[1,T]} X^H_t \leq 0 \right)} \,T^{o(1)},
\end{align*}
showing that also $\Prob{\left( \sup_{t\in[0,T]} X^H_t \leq 1 \right)} = T^{-\theta_X+o(1)}$ for $T \to \infty$. 

Now, note that, by \cite[Proposition~1.6]{AurzadaDereich2013}, we have
\begin{equation}\label{eq:dereich}
\frac {\Prob{\left(\sup_{t \in [(\log T)^p,T]} X_t^H \pm h(t) \le 1\right)}} {\Prob{\left(\sup_{t \in [(\log T)^p,T]} X_t^H \le 1\right)}} = T^{o(1)}, \qquad T \to \infty.
\end{equation}
Fix $\gamma$ with $K<\gamma<H$. 

{\it Upper bound:} 
\begin{align*}
&\Prob{\left( \sup_{t\in[0,T]} X^H_t + Y^K_t \leq 1 \right)} 
\\
&\leq \Prob{\left( \exists t\in [ (\log T)^p,T] \, : \, \left|Y^K_t\right| > h(t)\right)} + \Prob{\left( \sup_{t\in[(\log T)^p,T]} X^H_t -h(t) \leq 1 \right)} 
\\
&\leq \Prob{\left( \exists t\in [ (\log T)^p,T] \, : \, \left|Y^K_t\right| > t^\gamma\right)} + \Prob{\left( \sup_{t\in[(\log T)^p,T]} X^H_t \leq 1 \right)} \,T^{o(1)}
\\
&\leq T^{-\theta_X-\delta} + \frac{ \Prob{\left( \sup_{t\in[0,T]} X^H_t \leq 1 \right)} }{\Prob{\left( \sup_{t\in[0,(\log T)^p]} X^H_t \leq 1 \right)}}  \,T^{o(1)}
\\
&\leq T^{-\theta_X-\delta} + T^{-\theta_X+o(1)} (\log T)^{p \,\theta_X+o(1)} \, T^{o(1)} 
\\
&\leq T^{-\theta_X+o(1)}
\end{align*}
for $T$ large enough and $\delta >0,$ where $p$ is chosen according to Lemma \ref{lem:fbmlarge} for $\theta:=\theta_X$. Here, the second inequality uses \eqref{eq:dereich} and the property of $h$ that $h(t) \sim c \,t^H (\log t)^{-3/4} > t^\gamma$ for $t$ large enough, while the third inequality is Lemma \ref{lem:fbmlarge} together with Slepian's lemma. 

{\it Lower bound:} 
The opposite reasoning gives
\begin{align*}
T^{-\theta_X+o(1)} &= \Prob{\left( \sup_{t\in[0,T]} X^H_t \leq 1 \right)} 
\\
&\le \Prob{\left( \sup_{t\in[(\log T)^p,T]} X^H_t + h(t) \leq 1 \right)} \,T^{o(1)}
\\
&\le \Prob{\left( \exists t\in [ (\log T)^p,T] \, : \, \left|Y^K_t\right| > h(t)\right)} \,T^{o(1)}
\\
&\qquad + \Prob{\left( \sup_{t\in[(\log T)^p,T]} X^H_t + Y^K_t \leq 1 \right)} \,T^{o(1)}
\\
&\leq \Prob{\left( \exists t\in [ (\log T)^p,T] \, : \, \left|Y^K_t\right| > t^\gamma\right)} \,T^{o(1)}
\\
&\qquad + \Prob{\left( \sup_{t\in[(\log T)^p,T]} X^H_t + Y^K_t \leq 1 \right)} \,T^{o(1)}
\\
&\le T^{-\theta_X-\delta} + \frac {\Prob{\left( \sup_{t\in[0,T]} X^H_t + Y^K_t \leq 1 \right)}} {\Prob{\left( \sup_{t\in[0,(\log T)^p]} X^H_t + Y^K_t \leq 1 \right)}} \,T^{o(1)},
\end{align*}
where we used the definition of $\theta_X$ in the first, \eqref{eq:dereich} in the second and Lemma \ref{lem:fbmlarge} as well as Slepian's lemma in the fifth step. Precisely here, we use the assumption of non-negative covariances of $X^H+Y^K$. So we have
\begin{equation}\label{eq:inducstart-1}
\Prob{\left( \sup_{t\in[0,T]} X^H_t + Y^K_t \leq 1 \right)} \ge T^{-\theta_X+o(1)} \,\Prob{\left( \sup_{t\in[0,(\log T)^p]} X^H_t + Y^K_t \leq 1 \right)}.
\end{equation}
We then further estimate the right-hand side of \eqref{eq:inducstart-1} by replacing $T$ in \eqref{eq:inducstart-1} by $(\log T)^p$ and get
\begin{align}
&\Prob{\left( \sup_{t\in[0,T]} X^H_t + Y^K_t \leq 1 \right)}  \notag
\\&\ge T^{-\theta_X+o(1)} (\log T)^{-p \,\theta_X+o(1)} \,\Prob{\left( \sup_{t\in[0,(p \log \log T)^p]} X^H_t + Y^K_t \leq 1 \right)}. \label{eq:inducstart}
\end{align}
We set $f_0(T):=\log \log T$ and $f_N(T):=\log p + \log f_{N-1}(T)$ for $N \ge 1$. Using \eqref{eq:inducstart-1} iteratively then gives
\begin{align}
&\Prob{\left( \sup_{t\in[0,T]} X^H_t + Y^K_t \leq 1 \right)} \notag
\\&\ge T^{-\theta_X+o(1)} (\log T)^{(-p \,\theta_X+o(1)) (N+1)} \,\Prob{\left( \sup_{t\in[0,(p f_N(T))^p]} X^H_t + Y^K_t \leq 1 \right)} \notag
\\&=T^{-\theta_X+o(1)+\frac {(\log \log T) (N+1)} {\log T} (-p \,\theta_X+o(1))} \,\Prob{\left( \sup_{t\in[0,(p f_N(T))^p]} X^H_t + Y^K_t \leq 1 \right)}, \label{eq:Ntimes}
\end{align}
for $N \in \N$. This can be seen by induction: The induction base is \eqref{eq:inducstart}, while for the induction step, one has to note that
\begin{equation*}
\left(\log\!{\left((p f_{N-1}(T))^p\right)}\right)^p = \left(p (\log p + \log f_{N-1}(T))\right)^p = \left(p f_N(T)\right)^p.
\end{equation*}
Now we consider the function 
\begin{equation*}
\varphi_p(x):=\log p + \log x, \qquad x \in [2,\infty).
\end{equation*}
This is a contraction with Lipschitz constant $1/2$. The Lipschitz constant can be computed by the fact that $\varphi_p'(x)=1/x \le 1/2$ for $x \ge 2,$ while the self-map property of $\varphi_p$ is deduced from the fact that $\log p \ge 2$ holds by Lemma \ref{lem:fbmlarge}. Thus, the Banach fixed-point theorem yields a unique fixed-point $a_p \ge 2$ of $\varphi_p,$ which does not depend on $T$. Further, as $f_N(T)=\varphi_p(f_{N-1}(T)),$ we can estimate
\begin{align*}
\left|f_N(T)-a_p\right| &\le \frac {2^{-N}} {1-\frac {1} {2}} \left|f_1(T)-f_0(T)\right| 
\\&= 2^{1-N} \left|\log p + \log \log \log T - \log \log T\right| \le 2^{1-N} \cdot 3 \log \log T
\end{align*}
for $N \in \N$ and $T$ large enough, see e.g.~\cite[Theorem~1.1(iii)]{Agarwal2018}. For $N_T:=\lceil (\log \log \log T + \log 6)/\log 2 \rceil,$ this implies
\begin{equation*}
\left|f_{N_T}(T)-a_p\right| \le 2^{1-N_T} \cdot 3 \log \log T \le 1.
\end{equation*}
Considering \eqref{eq:Ntimes} for $N:=N_T$ consequently yields
\begin{align*}
\Prob{\left( \sup_{t\in[0,T]} X^H_t + Y^K_t \leq 1 \right)} &\ge T^{-\theta_X+o(1)} \,\Prob{\left( \sup_{t\in[0,(p \,f_{N_T}(T))^p]} X^H_t + Y^K_t \leq 1 \right)}
\\&\ge T^{-\theta_X+o(1)} \,\Prob{\left( \sup_{t\in[0,(p \,(1+a_p))^p]} X^H_t + Y^K_t \leq 1 \right)}
\\&=T^{-\theta_X+o(1)},
\end{align*}
which finishes the proof. 
\end{proof}

\noindent\textbf{Acknowledgement.} This work was supported by Deutsche
Forschungsgemeinschaft (DFG grant AU370/5). We thank Dominic Schickentanz for his useful comments improving the first draft of this paper. 

\nocite{*}
\bibliographystyle{plain}

\end{document}